\newcounter{defn}
\newcounter{rmk}
\pgfplotsset{compat=1.18}
\setlist{nolistsep}
\newtheoremstyle{plain}{3mm}{3mm}{\slshape}{}{\bfseries}{.}{.5em}{}
\newtheoremstyle{definition}{2mm}{2mm}{}{}{\bfseries}{.}{.5em}{}
\theoremstyle{plain}
\theoremstyle{plain}
\newtheorem*{namedthm}{\namedthmname}
\newcounter{namedthm}
\newcommand{\D}{\mathbb{D}}
\newcommand{\Z}{\mathbb{Z}}
\newcommand{\R}{\mathbb{R}}
\newcommand{\E}{\mathcal{E}}
\newcommand{\eps}{\epsilon}
\newcommand{\thee}{\Theta(U)}
\newcommand{\en}{\beta_1}
\newcommand{\mb}[1]{\mathbf{#1}}
\newtheorem{theorem}{Theorem}[section]
\newtheorem{lemma}[theorem]{Lemma}
\crefname{lemma}{lemma}{lemmas}
\Crefname{lemma}{Lemma}{Lemmas}
\theoremstyle{definition}
\newtheorem{definition}[defn]{Definition}
\newtheorem{remark}[rmk]{Remark}
\theoremstyle{plain}
\newcommand{\proj}{\nabla_{A_{m}}\cdot \hat{v_1} }
\title{On the dimension drop for harmonic measure on uniformly non-flat Ahlfors-David regular boundaries. }
\author{ Aritro Pathak}
\date{}
\begin{document}

\maketitle

\begin{abstract}
 We extend earlier results of Azzam on the dimension drop of the harmonic measure for a domain $\Omega\subset \R^{n}$ with $n\geq 3$, with Ahlfors-David regular boundary $\partial\Omega$ of dimension $s$ with $n-1-\delta_0 \leq s\leq n-1$, that is uniformly non flat. Here $\delta_0$ is a small positive constant dependent on the parameters of the problem. Our novel construction relies on elementary geometric and potential theoretic considerations. We avoid the use of Riesz transforms and compactness arguments, and also give quantitative bounds on the $\delta_0$ parameter.
\end{abstract}

\section{Introduction.}
 The study of the harmonic measure in relation to the geometric properties of the boundary of a domain $\Omega\subset \R^n$, is a classical subject in analysis. The harmonic measure arises in an important way in the Dirichlet problem. Previous studies, going back to the work in \cite{Rie23}, study the mutual absolute continuity of the harmonic measure with respect to the surface measure in various settings. 

  More recently, there has been a focus on the harmonic measure for boundaries $\partial\Omega$ which have codimension larger than $1$, through the conjectures in \cite{Vol22}, and the recent work of \cite{DJJ23, Tol24} for the special situation of boundaries that are a subset of a codimension one hyperplane or a manifold. In \cite{Tol24}, it is shown that in $\R^{n}$, for such a boundary lying on a codimension one hyperplane, with Hausdorff dimension greater than or equal to $n-3/2-\epsilon$ for some small enough $\eps$, there is a dimension drop of the harmonic measure. In \cite{DJJ23}, it is shown that in $\R^2$, a boundary with a small enough Hausdorff dimension (less than $0.2497$) in $\R^{3}$, and lying on a line, the harmonic measure is actually absolutely continuous with the surface measure.  
  Prior work by \cite{Car85, JW88, Vol93, Vol92, UZ02, Bat06, Bat00, Bat96} focused on the case of dynamically defined boundaries which satisfy some kind of totally disconnected properties, for which the dimension drop phenomenon was established. This enabled the use of some ergodic theory. Extensions of this work to the case of boundaries that are not dynamically defined or totally disconnected, but satisfying a uniformly non flatness condition was undertaken in \cite{Az20}, for the case of boundaries with codimension less than $1$. Here we work on extending these results.

Here we give a new argument, that ostensibly avoids the use of Riesz transforms, and the techniques stemming from the work of \cite{AHMMMTV16, Tol15}. We employ a double integral averaging argument with a Riesz type kernel, in \cref{lemma10}, and use in \cref{lemma12} a change of pole argument for the harmonic measure, to localize the problem with poles of harmonic measures close to a specific surface cube under question, by closely following the corresponding argument of Lemma 4.3 of \cite{Az20}. We also give explicit bounds on the dimension of the boundary, in terms of the parameters of the problem, for which the dimension drop of the harmonic measure is admissible.

  For prior work on related questions on the harmonic measure, and the dimension drop phenomenon, as well as related questions in uniform rectifiability of the boundary and absolute continuity of the harmonic measure, we refer the reader to, \cite{Wol95, Tol15, Pop98, MV86, Mey09, Mattila, Mak85, LVV05, KT99, HMMTZ17, HKM, Harmonic-Measure, Bou87, BE17, AAM16, AHMMMTV16, AMT17, AM15}. 

Let $\omega=\omega_\Omega$ denote the harmonic measure for a given domain $\Omega$. In this paper we, prove the following result.

\begin{theorem}\label{mainthm}
Given any integer $n \geq 3$, $C_{1} > 1$, and $0<\beta <1$, the following holds. Define, $N_\beta =\lceil \frac{1}{\beta} \rceil$ and $\beta_1:= \frac{1}{4N_{\beta}+2}$.

 Suppose $$\delta_0=\delta_0(\beta,C_1):=\beta_1^{\frac{4n\log C_1}{\beta_{1}^n}}.$$

 Suppose $\Omega \subseteq \mathbb{R}^{n}$ is a connected
domain. Further, for any point $x\in \R^n$ and any $r< \text{diam}(\Omega)$, we have,
\begin{equation}\label{bilateral}
b\beta_{\omega}(x,r)
:= \inf_{V}\left[
\sup_{y \in B(x,r)\cap \partial \Omega} \frac{\operatorname{dist}(y,V)}{r}
\;+\;
\sup_{y\in V\cap B(x,r)} \frac{\operatorname{dist}(y,\partial\Omega)}{r}
\right]
\ge \beta > 0,
\end{equation}
where the infimum is over all $(n-1)$-dimensional planes $V \subseteq \mathbb{R}^{n}$.

Further, assume that the boundary $\partial\Omega$ is Ahlfors-David $s$-regular, ($s,C_1$ AD-regular) meaning if
$\sigma = \mathcal{H}^{s}\!\!\mid_{\partial\Omega}$, then
\begin{equation}\label{adr}
C_{1}^{-1} r^{s} \le \sigma(B(x,r)) \le C_{1} r^{s}
\quad \text{for all } x \in \partial\Omega,\; 0 < r < \operatorname{diam}(\partial\Omega).
\end{equation}


Then for any $n-1-\delta_0 \leq  s \leq n-1$, we have, $\dim \omega_\Omega <  s$, meaning there is a set $K$ with
$\dim K <  s$ such that $\omega_\Omega(K^{c}) = 0$.
\end{theorem}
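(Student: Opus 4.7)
The strategy is to follow the dyadic martingale scheme of Azzam \cite{Az20}: build a Christ--David lattice $\mathcal{D}$ on $\partial\Omega$ and, for every surface cube $Q\in\mathcal{D}$, show that the non-flatness hypothesis $b\beta_{\omega_\Omega}\ge \beta$ forces a definite multiplicative imbalance between $\omega_\Omega$ and $\sigma$ across the next generation. Iterating this imbalance produces, for $\omega_\Omega$-almost every $x$, a lower density bound $\omega_\Omega(B(x,r))\gtrsim r^{\kappa s}$ with an explicit $\kappa<1$, from which the mass-distribution principle gives $\dim \omega_\Omega\le \kappa s$.

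First, I would fix a dyadic lattice $\mathcal{D}$ on $\partial\Omega$ with sides $\ell(Q)$ in a geometric sequence and $\sigma(Q)\approx \ell(Q)^{s}$, afforded by \eqref{adr}. For any $Q\in\mathcal{D}$ centered at $x_Q$ with $r=\ell(Q)$, \eqref{bilateral} forces either a boundary point of $B(x_Q,r)$ at distance $\ge \beta r$ from every $(n-1)$-plane, or a point of the best approximating plane at distance $\ge \beta r$ from $\partial\Omega$. A Vitali subdivision of $B(x_Q,r)$ at scale $\beta_1 r$ then produces at least one ball $B_Q=B(z_Q,\beta_1 r)\subset B(x_Q,r)$ disjoint from $\partial\Omega$ with $2B_Q\subset \Omega$. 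This geometric hole is the source of the dimension drop.

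Next I would translate the hole into a potential-theoretic deficit for $\omega_\Omega$. Fix a reference pole $p\in\Omega$ far from $\partial\Omega$ and apply the double-integral Riesz-type averaging of \cref{lemma10} to $\omega_\Omega^p$ on $B(x_Q,r)\cap\partial\Omega$; this bounds
\begin{equation*}
\iint \frac{d\omega_\Omega^p(y)\, d\omega_\Omega^p(y')}{|y-y'|^{s}}
\end{equation*}
above by a constant multiple of $\sigma$ on an adapted annulus, while the absence of boundary points inside $B_Q$ removes a quantitative contribution to this integral. Combining with the change-of-pole statement of \cref{lemma12}, which replaces $p$ by a corkscrew pole adapted to $Q$ at bounded loss, I would deduce that the $\sigma$-definite family of subcubes $R\subset Q$ of side $\beta_1 r$ lying within distance $C\beta_1 r$ of $B_Q$ satisfies
\begin{equation*}
\omega_\Omega\Big(\textstyle\bigcup R\Big)\le \mu\, \frac{\sigma(\bigcup R)}{\sigma(Q)}\,\omega_\Omega(Q)
\end{equation*}
for some explicit $\mu=\mu(\beta,C_1)<1$. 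Equivalently, the complementary (concentrating) subcubes of $Q$ carry almost all of $\omega_\Omega(Q)$ on a strictly smaller $\sigma$-fraction of $Q$.

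Finally, I would iterate along the tree of concentrating children. Along any descending chain $Q_0\supset Q_1\supset \cdots$, the ratio $\omega_\Omega(Q_k)/\sigma(Q_k)$ grows by a definite multiplicative factor $\rho>1$ at each generation; since $\sigma(Q_k)\approx \ell(Q_k)^{s}\approx (\beta_1^{k}\ell(Q_0))^{s}$, this yields $\omega_\Omega(B(x,r))\gtrsim r^{s-\log\rho/\log(1/\beta_1)}$ for $\omega_\Omega$-a.e.\ $x$, whence $\kappa=1-\frac{\log\rho}{s\log(1/\beta_1)}<1$ delivers the claim. The principal obstacle is that $s$ is allowed to approach $n-1$, compressing the margin between $\sigma$- and $\omega$-growth exponents and requiring \cref{lemma10} to be carried out with sharp rather than generous constants; this is precisely what forces the doubly-exponential threshold $\delta_0=\beta_1^{4n\log C_1/\beta_1^{n}}$ and, relatedly, what makes the localization step of \cref{lemma12} indispensable rather than cosmetic.
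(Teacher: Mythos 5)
There is a genuine gap, and it sits at the heart of the argument. You reduce the non-flatness hypothesis \eqref{bilateral} to the existence, at every scale, of a ball $B_Q\subset B(x_Q,r)$ with $2B_Q\subset\Omega$ disjoint from $\partial\Omega$, and you declare this hole to be "the source of the dimension drop." But such corkscrew holes exist automatically for any $s$-ADR boundary with $s<n$ (and even for a flat hyperplane, whose harmonic measure has full dimension $n-1$), so no argument that uses only the hole can produce a dimension drop; the full quantitative content of \eqref{bilateral} must enter at the potential-theoretic level. The paper's mechanism is entirely different: at a "corner" point $q_1$ nearest to a corkscrew point, the non-flatness forces each annulus $A_m=C(q_1,\kappa^m t_1)\setminus C(q_1,\kappa^{m+1}t_1)$ to carry surface measure at distance $\gtrsim\beta_1^{w}\kappa^m t_1$ from the tangent-like plane $\Sigma_1$ (\cref{lemma6}); under the standing assumption that the Poisson kernel is nearly constant, this yields a lower bound on the normal component of the gradient of the potential at $q_1$ from the intermediate region $I_{\mathrm{mid}}$ that grows linearly in the number $N$ of annuli, while the second-order Taylor terms and the near-field are controlled; since $G\equiv 0$ on $\partial\Omega$, the far-field gradient $T_N$ must balance this, and rerunning the argument at a deeper scale $N+N_2$ forces $T_{N}=T_{N+N_2}$ to be simultaneously too large and too small --- a contradiction that produces the density jump (\cref{lemma9}). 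Your intermediate step replaces this with an upper bound on the energy $\iint|y-y'|^{-s}\,d\omega\,d\omega$ from which you assert the deficit $\omega(\bigcup R)\le\mu\,\sigma(\bigcup R)\,\omega(Q)/\sigma(Q)$; no mechanism is given for either implication, and \cref{lemma10} does not perform such an energy estimate --- it is a pigeonhole (Fubini) argument locating a point $q_1'$ at which the near-field potential contribution is controlled, used only to neutralize $I_{\mathrm{near}}$.

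A second, independent problem is the final iteration. What \cref{lemma9} and \cref{lemma12} deliver is: in every dyadic cube $Q$ there \emph{exists} a subcube $Q_1$ with $\ell(Q_1)\ge\eta\,\ell(Q)$ whose density $\Theta_\omega^s$ differs from $\Theta_\omega^s(Q)$ by a factor $M$ (up or down). You instead assert that along \emph{every} descending chain the ratio $\omega(Q_k)/\sigma(Q_k)$ grows by a fixed factor $\rho>1$ per generation, yielding a uniform lower density bound $\omega(B(x,r))\gtrsim r^{\kappa s}$; this is both unproved and false in general (it would make $\omega$ purely atomic-like in a way the hypotheses do not support). The correct passage from density oscillation at every location and scale to $\dim\omega<s$ is the Bourgain--Batakis argument quoted as the paper's final lemma (Lemma 2.8 of \cite{Tol24}), which is a law-of-large-numbers statement about the pointwise dimension at $\omega$-a.e.\ point, not a mass-distribution bound. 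To repair the proposal you would need to (i) replace the hole/energy step by a genuine use of \eqref{bilateral} together with \eqref{adr} to force a non-cancelling gradient at a boundary point, and (ii) replace the monotone-chain iteration by the oscillation-to-dimension lemma.
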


Note that, $\beta_1\leq \beta/4$. In particular, we will only employ the \cref{bilateral} condition for any point $x\in\R^n$, particularly for planes passing through $x$. We do not restrict this condition to only points $x\in\partial\Omega$. Of course, if we were to choose the point $x$ and the plane $V\ni x$ far from the boundary $\partial\Omega$ depending on $\beta$, then the condition is vacuously true, and so the statement is only interesting when the point $x$ and the chosen plane $V$ are close to the boundary.

This condition roughly states that at any given scale the boundary cannot be well approximated by a plane; if all points of the boundary lie very close to a plane on a given scale locally, then locally the plane must have enough `holes' which are not close to the boundary, and if there is a plane so that all points of the plane locally lie very close to the boundary, then locally there must also be points of the boundary at a controlled distance away from this particular plane. The coefficient $b\beta_{\omega}(x,r)$ is natural in the context of many questions in rectifiability, and a weaker assumption than just requiring that points of the boundary locally stay away from planes by a uniform factor $\beta$, in which case we would omit the second term on the right, in the expression for $b\beta_{\omega}(x,r)$ in \cref{bilateral}.

\begin{remark}
    Note that we give an explicit bound on $\delta_0$ parameter, in terms of the parameter $\beta$ that characterizes the uniform non-flatness, as well as the Ahlfors-David regularity parameters. We do not employ the notion of the $\alpha$ numbers as in \cite{Tol15}, nor use a compactness argument as in \cite{Az20}, nor use the co-dimension one characterization of rectifiability and the absolute continuity of harmonic measure stemming from the work of \cite{AHMMMTV16}.
\end{remark}

\begin{remark}Note that the case of $(n-1)< s< n$ is proved more directly in \cite{Az20} by combining Lemma 4.1 and 4.3 of their paper. We omit the proof of this case in our paper, which is more direct with the use of the one sided Caffarelli-Fabes-Mortola-Salsa type estimate and a touching point argument. Instead, we focus on the larger codimension case.
\end{remark}

\begin{remark}
    In general, the Cantor type sets considered in \cite{Bat96} are not such $s$- dimensional Ahlfors regular sets. Given a specific example of Cantor sets with varying dissection ratios in different stages of the construction of those Cantor sets, the arguments of this paper can be modified appropriately for such a situation. Specifically, if the Ahlfors regularity exponent varies in a bounded way, one can adopt the methods of the proof of the case of $s=n-1$, $s\leq n-1$ from this paper, along with the more direct argument of $s>n-1$ from the earlier results.
\end{remark}
\begin{remark}
    We have not carried out the calculations in the plane $\R^2$, where with some modifications to the argument and of the fundamental solution which becomes logarithmic, one would expect a result of a similar structure to be true.
\end{remark}

\section{Basic estimates and overview of the proof.}
The Green function is constructed by setting
\begin{equation}\label{imp2}
G(X,Y) := -\mathcal{E}(X,Y)
\;+\;
\int_{\partial\Omega} \mathcal{E}(X , z)\, d\omega^{Y}(z),
\end{equation}
where $\mathcal{E}(X) := c_{n} |X|^{1-n}$ is the usual fundamental solution for the
Laplacian in $\mathbb{R}^{n+1}$. Note that we have chosen to take a negative sign for the contribution of the pole, and a positive sign for the contribution of the potential due to the harmonic measure. Also, $G(X,Y)$ is positive everywhere inside the domain, by construction, and it vanishes at the boundary. See for example, section 2 of \cite{HMMTZ17}.

The basic idea behind our proof is to first isolate some specific boundary ball $B$ and consider the harmonic measure due to some pole located within this boundary ball and with the pole being a controlled distance away from the boundary. This can be phrased in terms of the corkscrew point relative to this boundary ball.

We isolate `corner' regions, which are sub-balls of $B$ centered on a point on the boundary that is at a minimum distance from the given corkscrew point. If the harmonic measure in such a sub-ball centered on such a 'corner' point were to be approximately constant, one shows that at the corner point, there is a non zero and non uniform gradient of the potential due to this harmonic measure in this sub-ball.

Now, the entire surface must remain an equipotential, precisely at the value of $0$. This non uniform gradient due to the local harmonic measure has to be compensated by the contribution to the potential, coming from the harmonic measure located further away, along with the contribution due to the pole. In effect, one shows that the gradient due the harmonic measure further away, and the second order contribution from the harmonic measure located nearby and further away, both can be controlled in a manner that this compensation to the local non-uniform contribution cannot happen exactly. This is a contradiction, and we can thus quantify that the harmonic measure cannot be nearly constant, in subballs centered on this corner point. Given any arbitrarily large $M>1$, we find some $l(M)$ dependent on $M$ and a dyadic cube $C(M)$ of length $l(M)$, by a careful repetition of this argument, so that the average Poisson kernel over the cube $C(M)$ is at least a factor $M$ bigger than the average Poisson kernel over $B$:
\begin{align}\label{compare}
    k^{p_B}(C(M))\geq M k^{p_B}(B).
\end{align}

Next, by a Markov chain argument effectively outlined in Lemma 4.3 of \cite{Az20}, we compare the harmonic measure due the pole $p_B\in B$, that is a bounded distance away from the boundary relative to the radius of $B$, to the harmonic measure due to the pole at the a-priori chosen point $p$ which is located at an arbitrary point in the domain. This leads us to a lower bound of the same form as in \cref{compare}, but with a different constant $M_2$.

This happens in every dyadic cube in $\partial\Omega$. This eventually leads to a dimension drop of the harmonic measure. 

\section{Preliminaries}

The following lemma is originally due to Bourgain( \cite[Lemma 1]{Bou87}).

\begin{lemma} 
\label{l:bourgain}
If $\Omega\subseteq \R^{n}$ be a domain that has Ahlfors-David $s$-regular boundary, with constant $c_{1}$ and $s>n-2$, then there is $b\in (0,1)$ so that 
\begin{equation}
\label{e:bou}
\omega_{\Omega}^{x}(B(\xi,r))\geq c_{0} \;\;\mbox{ for }\xi\in \partial\Omega, \;\; 0<r<\text{diam} \partial\Omega,\;\; \mbox{ and }x\in B(\xi,b r).
\end{equation}
\end{lemma}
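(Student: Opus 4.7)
The plan is to construct a Newtonian potential barrier supported on a small piece of $E := B(\xi, r) \cap \partial\Omega$ and compare it against $\omega_\Omega^x(E)$ via the maximum principle. Fix $\alpha \in (0,1/2)$ to be chosen in terms of $n$ and $c_1$, and set $\nu := \sigma|_{B(\xi,\alpha r)\cap\partial\Omega}$. Consider the Newtonian potential
$$u(x) := \int |x-y|^{2-n}\, d\nu(y).$$
Since $n\ge 3$, the kernel $|x-y|^{2-n}$ is (up to sign and constant) the fundamental solution, so $u$ is superharmonic on $\R^n$, harmonic off $\supp\nu\subseteq\partial\Omega$, and in particular harmonic throughout $\Omega$.

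I would then establish three estimates that exploit $s>n-2$ together with $s$-AD regularity. First, a uniform upper bound $u(x)\le C_0\, r^{s-n+2}$ for every $x\in\R^n$, obtained by decomposing the integral over dyadic annuli centered at $x$ and summing the convergent geometric series (the exponent $s-(n-2)$ is positive). Second, a lower bound $u(x)\ge c_1^{-1}(2\alpha)^{2-n}\alpha^{s}\, r^{s-n+2}$ for $x\in B(\xi,\alpha r)$, which follows from $|x-y|\le 2\alpha r$ whenever $y\in\supp\nu$, combined with AD regularity on the scale $\alpha r$. Third, a far-field bound $u(y)\le c_1(1-\alpha)^{2-n}\alpha^{s}\, r^{s-n+2}$ for $y\in\partial\Omega\setminus B(\xi,r)$, using that $\supp\nu$ lies at distance at least $(1-\alpha)r$ from any such $y$.

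Next I would pick $\alpha$ small enough (of order $c_1^{-2/(n-2)}$) so that the lower bound on $u$ inside $B(\xi,\alpha r)$ strictly exceeds the far-field upper bound on $\partial\Omega\setminus B(\xi,r)$. With $\alpha$ fixed, the Poisson integral representation for the bounded continuous harmonic function $u$, valid because AD $s$-regular boundaries with $s>n-2$ are Wiener regular, yields for $x\in B(\xi,\alpha r)\cap\Omega$
$$u(x) = \int_E u\, d\omega_\Omega^x + \int_{\partial\Omega\setminus E} u\, d\omega_\Omega^x \le C_0\, r^{s-n+2}\, \omega_\Omega^x(E) + c_1(1-\alpha)^{2-n}\alpha^{s}\, r^{s-n+2}.$$
Dividing by $r^{s-n+2}$, inserting the lower bound on $u(x)$, and rearranging produces $\omega_\Omega^x(E)\ge c_0 > 0$ with $b:=\alpha$ and $c_0$ depending only on $n$, $s$, and $c_1$.

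The main obstacle is opening the quantitative gap between the inside bound and the far-field bound: both carry the same $r^{s-n+2}$ scaling and both involve $c_1$, so the comparison forces $\alpha < (1+2c_1^{2/(n-2)})^{-1}$, which is the precise source of the dependence of $b$ on the AD constant. A secondary technical point is the justification of the Poisson representation, which follows from Wiener regularity of AD-regular boundaries of codimension less than two but should be invoked explicitly.
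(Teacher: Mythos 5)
The paper does not prove this lemma at all --- it is quoted from Bourgain \cite{Bou87} --- so there is no in-paper argument to compare against; your barrier construction is exactly the standard proof of Bourgain's estimate, and it is correct: the hypothesis $s>n-2$ enters precisely where you use it (convergence of the dyadic-annulus sum for the global bound on $u$, and Wiener regularity via the capacity density condition), and the comparison of the interior lower bound with the far-field bound does yield an admissible $\alpha\lesssim c_1^{-2/(n-2)}$ and hence $b$ and $c_0$ depending only on $n$, $s$, $c_1$. Two minor points worth making explicit in a write-up: the constant $C_0$ in the global upper bound degenerates like $(s-(n-2))^{-1}$ as $s\downarrow n-2$, so $c_0$ genuinely depends on $s$; and the identity $u(x)=\int_{\partial\Omega}u\,d\omega_\Omega^x$ for a bounded harmonic function continuous up to the boundary already follows from Perron--Wiener--Brelot resolutivity (plus the decay of $u$ at infinity when $\Omega$ is unbounded), so Wiener regularity, while available here, is not strictly needed for that step.
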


An alternate formulation of the above gives us the following.
\begin{lemma}\label{r:bourgain}

If $\Omega\subseteq \R^{n}$ has Ahlfors-David $s$-regular boundary, then for any point $x\in \Omega$ we have that, 
\begin{align}
    \omega_{\Omega}^{x}(B(\hat{x},10\delta(x)))\geq c_0,
\end{align}
    where $\delta(x)$ is the distance of the point $x$ to the boundary, and $\hat{x}$ is a point on the boundary $\partial\Omega$ closest to $x$.
\end{lemma}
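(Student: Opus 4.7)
The plan is to obtain Lemma \ref{r:bourgain} as a nearly immediate specialization of Bourgain's lemma. Given $x\in\Omega$, let $\hat x\in\partial\Omega$ be any nearest boundary point, so that $|x-\hat x|=\delta(x)$. I would apply Lemma \ref{l:bourgain} with $\xi=\hat x$ and $r=10\delta(x)$; the hypothesis $x\in B(\xi,br)$ then reads $\delta(x)<10b\,\delta(x)$, i.e.\ $b>1/10$. The underlying Bourgain argument in fact furnishes, for every fixed $b\in(0,1)$, a constant $c_0=c_0(b)>0$ for which \eqref{e:bou} holds (the constant being uniformly positive as long as $b$ is bounded away from $1$). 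Hence, fixing any $b\in(1/10,1)$, say $b=1/5$, directly gives $\omega_{\Omega}^{x}(B(\hat x,10\delta(x)))\geq c_0$, with $c_0$ depending only on the AD-regularity data.

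The one step deserving a brief check is this $b$-dependence of the constant in Lemma \ref{l:bourgain}: in the standard proof one tests a positive subharmonic barrier built from the fundamental solution $\E$ against the harmonic measure on $B(\xi,r)\cap\partial\Omega$, and the resulting lower bound stays uniformly positive for any fixed $b$ bounded away from $1$. Since we only ever need a single fixed value like $b=1/5$, no deterioration occurs and the output constant remains absolute in terms of $n$, $s$, and $c_1$.

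If one instead wishes to use only the particular (possibly small) $b$ produced by the statement of Lemma \ref{l:bourgain} as a black box, the same conclusion can be retrieved by a short Harnack-chain argument along the segment $[\hat x,x]$. The function $u(y):=\omega_{\Omega}^{y}(B(\hat x,10\delta(x)))$ is positive and harmonic in $\Omega$; a direct application of Lemma \ref{l:bourgain} to the smaller ball $B(\hat x,\delta(x))\subset B(\hat x,10\delta(x))$ supplies $u(y_0)\geq c_0$ at any point $y_0$ within distance $b\delta(x)$ of $\hat x$; and since the distance to $\partial\Omega$ grows linearly along the segment, a fixed finite number of interior Harnack steps transports the bound from $y_0$ to $x$ with only a multiplicative loss depending on $b$. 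This fallback involves no new ingredients and is the main (mild) obstacle if one objects to re-reading the proof of Lemma \ref{l:bourgain} for an arbitrary $b$.
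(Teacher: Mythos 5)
Your proposal is correct and follows the same route the paper intends: the paper offers no proof at all, simply presenting the lemma as a reformulation of Bourgain's Lemma \ref{l:bourgain}. The only genuine point of care is the one you identify — the black-box constant $b$ need not exceed $1/10$ — and both of your remedies (re-running Bourgain's barrier argument for a fixed $b$ bounded away from $1$, or applying Lemma \ref{l:bourgain} at scale $\delta(x)$ and transporting the bound to $x$ by a Harnack chain of bounded length along the segment $[\hat x, x]$, which lies in $\Omega$ with linearly growing distance to $\partial\Omega$) are standard and valid.
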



\begin{lemma} \label{l:holder}
Let $\Omega\subsetneq \R^{d+1}$ be an open set that satisfies the Capacity Density condition and let $x\in \partial\Omega$. Then there is $\alpha>0$ so that for all $0<r<\text{diam}(\Omega)$,
\begin{equation}\label{e:holder}
 \omega_{\Omega}^{y}({B}(x,r)^{c})\lesssim \Big({\frac{|x-y|}{r}}\Big)^{\alpha},\quad \mbox{ for all } y\in \Omega\cap B(x,r),
 \end{equation}
where $\alpha$ and the implicit constant depend on $n$ and the CDC constant.
\end{lemma}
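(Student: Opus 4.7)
The plan is to iterate a Bourgain-type lower bound on harmonic measure at successively finer scales around $x$. Set $u(z) := \omega_\Omega^z(B(x,r)^c \cap \partial\Omega)$, which is bounded harmonic in $\Omega$ with boundary values $1$ on $\partial\Omega\setminus B(x,r)$ and $0$ on $\partial\Omega\cap B(x,r)$; the goal reduces to showing that $u(y)$ decays as a power of $|x-y|/r$. The key input is Bourgain's harmonic-measure lower bound for CDC domains (the CDC version of \cref{l:bourgain}): there exist constants $b_0\in(0,1)$ and $c_0>0$, depending only on $n$ and the CDC constant, such that for every $\xi\in\partial\Omega$, every $0<\rho<\text{diam}(\Omega)$, and every $z\in B(\xi,b_0\rho)\cap\Omega$,
$$\omega_\Omega^z\bigl(\partial\Omega\cap B(\xi,\rho)\bigr)\;\geq\;c_0.$$

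The main claim I would prove by induction on $k\geq 0$ is that $u(z)\leq (1-c_0)^k$ throughout $\Omega\cap B(x,b_0^k r)$. The base case $k=0$ is trivial. For the inductive step, set $D_k:=\Omega\cap B(x,b_0^k r)$ and observe that $u$ is harmonic in $D_k$, vanishes on $\partial\Omega\cap B(x,b_0^k r)\subset\partial D_k$, and, by the inductive hypothesis together with continuity in the interior and the zero boundary value on $\partial\Omega$, is bounded by $(1-c_0)^k$ on the remaining part of $\partial D_k$, namely $\partial B(x,b_0^k r)\cap\overline\Omega$. The maximum principle in $D_k$ then yields
$$u(z)\;\leq\;(1-c_0)^k\,\omega_{D_k}^z\bigl(\partial B(x,b_0^k r)\cap\overline\Omega\bigr).$$
Applying the CDC Bourgain bound to the subdomain $D_k$ at scale $b_0^k r$ gives $\omega_{D_k}^z(\partial\Omega\cap B(x,b_0^k r))\geq c_0$ for $z\in B(x,b_0^{k+1}r)\cap\Omega$, so the complementary measure above is at most $1-c_0$; this closes the induction.

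From the geometric decay the Hölder estimate is immediate: for any $y\in B(x,r)\cap\Omega$, pick $k$ to be the largest integer with $b_0^k r\geq |x-y|$ and set $\alpha:=\log(1/(1-c_0))/\log(1/b_0)$ to obtain $u(y)\lesssim (|x-y|/r)^\alpha$, as required. The one delicate point, which I expect to be the main obstacle to write out cleanly, is the application of Bourgain in the \emph{subdomain} $D_k$ rather than in $\Omega$: this requires that $\partial D_k$ inherits the CDC at points near $x$. Because $x\in\partial\Omega\subset\partial D_k$ and, at scales below $b_0^k r$, the boundary $\partial D_k$ agrees with $\partial\Omega$ near $x$, the capacity density of $\partial D_k$ at $x$ is at least that of $\partial\Omega$, so the CDC Bourgain applies with comparable constants. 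This inheritance is standard but is the only non-formal step in the argument.
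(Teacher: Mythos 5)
Your proof is correct, and it is the standard argument for this lemma (which the paper states as a known preliminary without giving a proof): iterate the Bourgain-type estimate $\omega_{D_k}^z(\partial\Omega\cap B(x,b_0^k r))\ge c_0$ at dyadically shrinking scales, use the maximum principle in $D_k=\Omega\cap B(x,b_0^k r)$ to get the geometric decay $u\le (1-c_0)^k$, and convert the decay into a H\"older exponent $\alpha=\log(1/(1-c_0))/\log(1/b_0)$. The one point you flag as delicate is indeed the only one requiring care, and your resolution is right: Bourgain's lemma is local, needing only a lower capacity bound for $B(x,s)\setminus D_k$ at scales $s\le b_0^k r$, and since $B(x,s)\setminus D_k\supseteq B(x,s)\setminus\Omega$ the CDC for $\Omega$ passes to $D_k$ with the same constant, so the iteration closes with constants depending only on $n$ and the CDC constant.
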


Also, it is well known that in the case of domains with boundaries $\partial\Omega$ of codimension greater than or equal to $1$, we automatically have the corkscrew condition satisfied.

\begin{definition}\label{def1}
    Corkscrew domain:  An open set $\Omega\subset \R^{n+1}$ satisfies the interior corkscrew condition if for some uniform constant $c$ with $0<c<1$, and for every surface ball $\Delta:=\Delta(x,r)$ with $x\in \partial \Omega$ and $0<r<\text{diam}\partial\Omega$, there is a ball $B(A(x,r),cr)\subset \Omega\cap B(x,r)$. The point $A(x,r)\in \Omega\cap B(x,r)$
 is called an interior corkscrew point relative to $\Delta$.
 \end{definition}


%
%
%

We also introduce the multipole expansion, which is used to estimate the strength of the derivative and the second derivative terms for the potential due to the harmonic measure on the boundary in arbitrary dimensions $n\geq 3$.
\begin{definition} 
Multipole expansion:
\begin{align}\label{mult expansion}
\frac{1}{\lvert \mathbf{r} - \mathbf{h} \rvert}
=
\frac{1}{r}
+ \frac{\mathbf{r} \cdot \mathbf{h}}{r^{3}}
+ \frac{1}{2 r^{3}}
\left(
3 \left( \frac{\mathbf{r} \cdot \mathbf{h}}{r} \right)^{2}
- |\mathbf{h}|^2
\right)
+ O(\frac{h^3}{|\mathbf{r}^4|}),
\end{align}

where we have $r=|\mathbf{r}|$.
\end{definition}

Note that for $\mathcal{E}(x,y)$ being the contribution at $x$, from the fundamental solution due to the pole in $y$, we can write, using the Taylor theorem to the second order, for $\mb{q_2}=\mb{q_1}+\mb{h}$, that, 
\begin{align}\label{taylor}
    \E(\mb{q_2},\mb{y})-\E(\mb{q_1},\mb{y})=\mb{h}\cdot \nabla\E(\mb{q_1},\mb{y})+\frac{\partial^2\E}{\partial x^2}\Big|_{(\mb{q_1 +\theta_y h})}|\mb{h}|^2,
\end{align}
Here $y$ is the fixed pole and $\E(,\mb{y})$ is treated as a function of the first coordinate, and we write $\theta_{\textbf{y}}$ as a function of $\textbf{y}$. Here, for simplicity of notation, we assume that we have rotated the coordinates so that $\mb{q_2}-\mb{q_1}=\mb{h}$ has a coordinate representation of $(h,0,\dots,0)$, and so it suffices to consider only the second derivative with the $x$ variable, whereas other contributions to the Hessian term drops out. Further, an elementary calculation using the multipole expansion of \cref{mult expansion} gives us estimates for the second derivative itself. 

We will use the Taylor theorem up to second order in all the estimates, to find the difference in potential at two nearby points. We call the first term on the right of \cref{taylor}, the gradient contribution, and the second term on the right as the second order contribution to the difference in the potential between the points $\mb{q_1}$ and $\mb{q_2}$. We will integrate over the $\mb{y}$ variable, using the Ahlfors-David regularity property for the surface measure, to get lower and upper estimates on the gradient and the second derivative terms, and these will be used routinely throughout the proofs, along with hypothesized upper and lower bounds to the Poisson kernel in the source regions.

We also need the following important result on the existence and the properties of the dyadic cubes, and this is quoted from Chapter 3 of \cite{DS2}.

\begin{theorem}[Existence and properties of the dyadic grid:]\label{cubelemma}
 Let $E$ be a $d$-dimensional Ahlfors-David regular set in $\mathbb{R}^{n}$.  
It is possible to construct a family of subsets of $E$ that behave in much the same way as do the dyadic cubes in $\mathbb{R}^n$.  
More precisely, one can construct a family $\Delta_j$, $j \in \mathbb{Z}$, of measurable subsets of $E$ with the following properties:

\begin{enumerate}
\item[(1)] Each $\Delta_j$ is a partition of $E$, i.e.,
\[
E = \bigcup_{Q \in \Delta_j} Q, \qquad
Q \cap Q' = \varnothing \quad \text{whenever } Q, Q' \in \Delta_j \text{ and } Q \neq Q'.
\]

\item[(2)] If $Q \in \Delta_j$ and $Q' \in \Delta_k$ for some $k \ge j$, then either $Q \subset Q'$ or $Q \cap Q' = \varnothing$.

\item[(3)] For all $j \in \mathbb{Z}$ and all $Q \in \Delta_j$, we have
\[
C^{-1} 2^{\,j} \le \operatorname{diam} Q \le C 2^{\,j},
\]
and
\[
C^{-1} 2^{\,jd} \le |Q| \le C 2^{\,jd}.
\]

\item[(4)] For all $j \in \mathbb{Z}$ and $Q \in \Delta_j$, the cube $Q$ has a small boundary, in the sense that
\[
\bigl|\{x \in Q : \operatorname{dist}(x, E \setminus Q) \le \tau 2^{\,j}\}\bigr|
\;+\;
\bigl|\{x \in E \setminus Q : \operatorname{dist}(x, Q) \le \tau 2^{\,j}\}\bigr|
\;\le\;
C \, \tau^{1/C} 2^{\,jd},
\]
for all $0 < \tau < 1$.
\end{enumerate}

Here $C$ is a constant that depends only on $d$, $n$, and the regularity constant for $E$.  Further $|\cdot|=\mathcal{H}^d|_{E}$ is the restriction of the $d-$ dimensional Hausdorff measure to $E$.
\end{theorem}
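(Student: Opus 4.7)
I plan to follow the classical David--Christ construction, which has three ingredients: (i) a hierarchy of nets at each dyadic scale, (ii) a parent map organizing these nets into a tree, and (iii) a coherent Voronoi-type assignment of points of $E$ to net points. Concretely, for each $j\in\Z$ pick a maximal $2^{j}$-separated subset $\mathcal{X}_{j}=\{x_{j,k}\}_{k}\subset E$; by maximality $\{B(x_{j,k},2^{j})\}_{k}$ covers $E$. Then define a parent map $p\colon\mathcal{X}_{j}\to\mathcal{X}_{j+1}$ by sending $x_{j,k}$ to a closest element of $\mathcal{X}_{j+1}$, with ties broken once and for all by a fixed lexicographic ordering of the indexing; iterating $p$ endows each $x_{i,k}$ with a unique level-$j$ ancestor for every $j\ge i$. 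For $x\in E$ I then let $k_{j}(x)$ be the level-$j$ ancestor of the net point in $\mathcal{X}_{j_{0}}$ nearest to $x$, for $j_{0}\le j$ small enough that this assignment has stabilized (stability is automatic because each fine-scale net point has only one ancestor per coarser level, so a nearest-point chain stabilizes in the tree). Define $Q_{j,k}:=\{x\in E:k_{j}(x)=k\}$.

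Properties (1)--(3) then drop out quickly. (1) holds because $k_{j}(\cdot)$ is a function of $x$. (2) follows from consistency of ancestries across scales: for $Q\in\Delta_{j}$ and $Q'\in\Delta_{k}$ with $k\ge j$, one has $Q\subset Q'$ iff the level-$k$ ancestor of the center of $Q$ is the center of $Q'$, and otherwise $Q\cap Q'=\varnothing$. Property (3) follows from the geometric sandwich
\[
B(x_{j,k},c_{0}2^{j})\cap E\ \subset\ Q_{j,k}\ \subset\ B(x_{j,k},C_{0}2^{j})\cap E,
\]
with $0<c_{0}<1<C_{0}$ read off from the separation and covering properties of the nets, combined with $s$-regularity to convert diameter bounds into measure bounds.

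The main obstacle, and the genuinely delicate step, is property (4). The plan is to show that each $x\in Q_{j,k}$ within distance $\tau 2^{j}$ of $E\setminus Q_{j,k}$ lies near the perpendicular bisector of some pair of net points $x_{i,\ell},x_{i,\ell'}\in\mathcal{X}_{i}$ with distinct level-$j$ ancestors, at some scale $i\le j$. For each such $i$ only $O(1)$ pairs contribute (those with $|x_{i,\ell}-x|,|x_{i,\ell'}-x|\lesssim 2^{i}$), and I would bound
\[
\sigma\Bigl(\bigl\{y\in E:\bigl|\,|y-x_{i,\ell}|-|y-x_{i,\ell'}|\,\bigr|\le C\tau 2^{j}\bigr\}\cap B(x_{i,\ell},C'2^{i})\Bigr)\ \le\ C(\tau 2^{j-i})^{\alpha}\,2^{id}
\]
by covering the indicated slab with AD-regular balls of radius roughly $\tau 2^{j}$ transverse to the bisecting hyperplane and counting using $s$-regularity. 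Geometric summation over $i\le j$ then produces the target $C\tau^{1/C}2^{jd}$. The subtle point is compatibility with the nested structure required by (2): bisector slabs at different scales must not interfere. I would handle this by fixing the lexicographic tie-breaking once at the outset (equivalently, by using half-open Voronoi cells under a single scale-independent rule), which makes the cube boundaries simultaneously controllable at all coarser scales. The symmetric bound on $\{x\in E\setminus Q_{j,k}:\operatorname{dist}(x,Q_{j,k})\le\tau 2^{j}\}$ is obtained by the same argument applied from the outside.
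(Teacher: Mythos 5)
First, note that the paper does not prove this statement at all: it is quoted verbatim from Chapter 3 of David--Semmes \cite{DS2} (the construction going back to David and Christ), so there is no ``paper proof'' to compare against. Your overall architecture (maximal $2^j$-separated nets, a parent map, a Voronoi-type ancestry assignment, the sandwich $B(x_{j,k},c_02^j)\cap E\subset Q_{j,k}\subset B(x_{j,k},C_02^j)\cap E$) is indeed the standard route to properties (1)--(3), modulo one technical overclaim: the assertion that the nearest-net-point ancestry ``stabilizes automatically'' as $j_0\to-\infty$ is false as stated, because $n_{j_0-1}(x)$ need not be a descendant of $n_{j_0}(x)$, so the level-$j$ ancestors of the chain $n_{j_0}(x)$ can oscillate forever for boundary points $x$. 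The standard fix (define the cubes as unions over all descendant net points, or work with Christ's partial order and accept that the partition in (1) holds after an arbitrary but consistent assignment of a measure-zero set) is routine, but it must be said.

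The genuine gap is in your proof of property (4). Your key estimate
\[
\sigma\Bigl(\bigl\{y\in E:\bigl|\,|y-x_{i,\ell}|-|y-x_{i,\ell'}|\,\bigr|\le C\tau 2^{j}\bigr\}\cap B(x_{i,\ell},C'2^{i})\Bigr)\ \le\ C(\tau 2^{j-i})^{\alpha}\,2^{id}
\]
is false for general Ahlfors--David regular sets. A neighborhood of the perpendicular bisector of two points of $E$ can carry \emph{all} of the local measure of $E$: nothing in AD-regularity prevents $E$ from containing, inside $B(x_{i,\ell},C'2^i)$, a full-measure piece lying in (or symmetric about) the bisecting hyperplane of $x_{i,\ell}$ and $x_{i,\ell'}$. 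Already for $E$ a line in $\R^2$ one can place a long subarc of $E$ inside such a slab of arbitrarily small width. A power-type transversality bound of the kind you invoke requires flatness or rectifiability hypotheses that are not available here; indeed, the whole point of the David--Christ construction is that it needs none. The correct proof of (4) is an iteration/stopping-time argument: one shows that the $\tau 2^j$-neighborhood of $\partial Q$ is covered by subcubes of generation $j-m$ (with $2^{j-m}\sim\tau 2^j$), and that at each intermediate generation a definite proportion of the remaining near-boundary measure is absorbed into the inner balls $B(x_{i,\ell},c_02^i)\cap E\subset Q_{i,\ell}$, which are quantitatively far from $E\setminus Q$; iterating gives the geometric decay $(1-C^{-1})^m\sim\tau^{1/C}$. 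You should replace the bisector-slab argument with this iteration; as written, the proof of (4) does not go through.
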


The value of $d$ need not be an integer above.

 Further, we also have, as a consequence of (4) , the following. 

\begin{theorem}\label{center}
There is a constant $C_1 > 0$, depending only on $d$, $n$, and the regularity constant for $E$, such that we can associate to each cube $Q \in \Delta$ a “center’’ $c(Q) \in Q$ which satisfies
\begin{equation}
\label{eq:36}
\operatorname{dist}(c(Q),\, E \setminus Q) \;\ge\; C_1^{-1} \, \operatorname{diam} Q.
\end{equation}
Thus we have the surface ball $B_Q :=\Delta(c(Q), C_{1}^{-1}\text{diam} Q)\subset Q$.

\end{theorem}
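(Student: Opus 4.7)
The plan is to deduce Theorem \ref{center} directly from the small boundary property (4) of Theorem \ref{cubelemma} by a one-line pigeonhole between the measure lower bound on $Q$ and the measure upper bound on the thin neighborhood of $E \setminus Q$ inside $Q$. The intuition is that if almost every point of $Q$ were within distance $\tau \cdot 2^j$ of $E \setminus Q$, then property (4) would force $|Q| \le C \tau^{1/C} 2^{jd}$, which contradicts the lower bound $|Q| \ge C^{-1} 2^{jd}$ from (3) once $\tau$ is chosen small enough.

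More concretely, fix $Q \in \Delta_j$ and consider the near-boundary layer
\[
B_\tau := \{x \in Q : \operatorname{dist}(x, E \setminus Q) \le \tau \cdot 2^j\}.
\]
From (4), $|B_\tau| \le C \tau^{1/C} 2^{jd}$, while (3) gives $|Q| \ge C^{-1} 2^{jd}$. I would then choose $\tau_0 > 0$ depending only on $d$, $n$, and the AD-regularity constant so that $C \tau_0^{1/C} < \tfrac{1}{2} C^{-1}$; for instance $\tau_0 = (2C^2)^{-C}$. Then $|B_{\tau_0}| < |Q|$, so the complement $Q \setminus B_{\tau_0}$ is nonempty, and any point in it is a candidate for $c(Q)$.

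For such $c(Q)$ one has $\operatorname{dist}(c(Q), E \setminus Q) > \tau_0 \cdot 2^j$, and since $\operatorname{diam}(Q) \le C \cdot 2^j$ by (3), this upgrades to $\operatorname{dist}(c(Q), E \setminus Q) > (\tau_0 / C) \operatorname{diam}(Q)$. Setting $C_1 := C/\tau_0$ gives the displayed bound \eqref{eq:36}. The containment $B_Q := \Delta(c(Q), C_1^{-1} \operatorname{diam}(Q)) \subset Q$ is then automatic: any point of $E$ within distance $C_1^{-1} \operatorname{diam}(Q)$ of $c(Q)$ cannot lie in $E \setminus Q$ by definition of that distance, so it lies in $Q$.

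There is no substantive obstacle here; the whole proof reduces to choosing one threshold $\tau_0$. The only mild point worth recording is that $\tau_0$, and hence $C_1$, must be chosen uniformly in $j$ and in the particular cube $Q$, but this is immediate because the constants $C$ appearing in (3) and (4) of Theorem \ref{cubelemma} are themselves uniform. One could alternatively phrase the argument probabilistically, picking $c(Q)$ uniformly at random in $Q$ with respect to the AD-regular measure and observing that the probability of landing in $B_{\tau_0}$ is strictly less than $1$, but the measure-theoretic pigeonhole above is the cleanest formulation.
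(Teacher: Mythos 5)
Your proof is correct and follows exactly the route the paper has in mind: the paper gives no proof beyond asserting the theorem is ``a consequence of (4),'' and you have supplied the standard measure-theoretic pigeonhole argument (small boundary layer from property (4) versus the measure lower bound from property (3), with a uniform choice of $\tau_0$) that one finds in David--Semmes. Two minor cosmetic remarks: with your choice $\tau_0=(2C^2)^{-C}$ one gets $C\tau_0^{1/C}=\tfrac12 C^{-1}$, i.e.\ equality rather than the strict inequality you wrote, but since $|B_{\tau_0}|\le\tfrac12 C^{-1}2^{jd}<C^{-1}2^{jd}\le|Q|$ the conclusion $Q\setminus B_{\tau_0}\neq\varnothing$ still holds; and one should note the trivial edge case $E\setminus Q=\varnothing$ (e.g.\ when $Q$ exhausts $E$ at a top scale), for which any choice of $c(Q)$ works.
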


Henceforth, it should be understood from the context that we mean the harmonic measure with respect to the domain $\Omega$ and we simply write $\omega$ in place of $\omega_\Omega$.  In certain situations, we will use the harmonic measure in certain subdomains, by eventually adopting the argument in Lemma 4.3 of \cite{Az20}.

Note that the condition of \cref{bilateral} is taken to hold over any ball centered over points on the boundary. One easily sees that this is equivalent to having the condition hold over cubes instead of balls. 

Assume \cref{bilateral}. In this case, given any arbitrary ball $B(x,r)$ with $x\in \partial\Omega$, consider the cube $C(x,r)$ of minimal volume that contains $B(x,r)$. Here, $C(x,r)$ is the Euclidean cube given by $\{(y_1,\dots,y_n):x_i -r\leq y_i\leq x_i +r , 1\leq i\leq n\}$, where $x=\{x_1,\dots,x_n\}$.

Then obviously we have,
\begin{align}
    \sup_{y \in C(x,r)\cap \partial \Omega} \frac{\operatorname{dist}(y,V)}{r} \geq  \sup_{y \in B(x,r)\cap \partial \Omega} \frac{\operatorname{dist}(y,V)}{r}, \sup_{y \in C(x,r)\cap V} \frac{\operatorname{dist}(y,\partial\Omega)}{r} \geq  \sup_{y \in B(x,r)\cap V} \frac{\operatorname{dist}(y,\partial\Omega)}{r}
\end{align}

Conversely, once we have the condition of \cref{bilateral} for cubes in place of balls, by considering the balls of minimal volume containing any given cube $C(x,\mathscr{r})$, we get the converse statement. Here, $C(x,\mathscr{r})$ is the cube with center $x$ and diameter $r$.

By a similar argument, the condition of \cref{adr} can also be written in terms of cubes in place of balls. Without loss of generality, we keep the same constants for the case of the cubes, in \cref{adr}.

Thus, henceforth we use the following two conditions; for any point $x\in \R^n$ and any $r< \text{diam}(\Omega)$, we have,
\begin{equation}\label{bilateral'}\tag{1'}
b\beta_{\omega}(x,r)
:= \inf_{V}\left[
\sup_{y \in C(x,r)\cap \partial \Omega} \frac{\operatorname{dist}(y,V)}{r}
\;+\;
\sup_{y\in V\cap C(x,r)} \frac{\operatorname{dist}(y,\partial\Omega)}{r}
\right]
\ge \beta > 0,
\end{equation}
where the infimum is over all $(n-1)$-dimensional planes $V \subseteq \mathbb{R}^{n}$.
 
Further, assuming that the boundary $\partial\Omega$ is Ahlfors-David $s$-regular, ($s,C_1$ AD-regular), then we also have,
\begin{equation}\label{adr}\tag{2'}
C_{1}^{-1} r^{s} \le \sigma(C(x,r)) \le C_{1} r^{s}
\quad \text{for all } x \in \partial\Omega,\; 0 < r < \operatorname{diam}(\partial\Omega).
\end{equation}
 
Due to the fact that Euclidean cubes can be made to form a disjoint union of a given larger Euclidean cube, with appropriately scaled radii, they are more natural to work with instead of balls in our argument, as will become apparent in the course of the proof.

Our arguments are more direct and elementary, and only uses the relationship between the harmonic measure and the Dirichlet Green function in a given domain as in \cref{imp2}, and elementary potential theoretic considerations.

\section{Notation}

Throughout the proof, $C_i>0$ for any $i\geq 1$, denotes a constant whose value may change from line to line. 
When necessary, we indicate the dependence of $C$ on ambient parameters.

For any $Q\in \Delta_j$, we denote by $l(Q)=2^j$, with the value of $j$ from \cref{cubelemma}, and we denote by $r(B)$ the radius of a ball $B(x,r)$ of radius $r$ centered at $x$. We denote by $\frac{1}{C}B$ the ball withthe same radius as $B$ but with $1/C$ times the radius of $B$. By $S(x,r)$ we denote the sphere of radius $r$ centered at $x$.

We borrow the notation from \cite{Az20}, for a Radon measure $\mu$, $s\geq 0$, and a ball $B$, we define
\begin{align}
\Theta_{\mu}^{s}(B)= \frac{\mu(B)}{r(B)^{s}}.
\end{align}

We also use the following notation to define the density of the harmonic measure for any one of the cubes of \cref{cubelemma}, as,
\begin{align}
    k^{t}(Q):=\frac{\omega^{t}_{\Omega}(Q)}{\sigma(Q)}.
\end{align}

When necessarily, we also use a subscript, for the harmonic measure, to denote the domain in which we are working, and we also use a superscript when necessary to denote the pole of the harmonic measure. This should be understood from context. When no subscript is used, it is understood that the domain in consideration is $\Omega$.

We also distinguish between Euclidean cubes with faces parallel to the coordinate planes, such as $\{(x_1,\dots,x_n): b_i\leq x_i\leq a_i, \text{for all}\ 1\leq i\leq n.  \}$, and the dyadic cubes of David and Semmes\cite{DS1,DS2} mentioned in Theorem 4. As mentioned earlier, $C(x,r)$ is the Euclidean cube given by $C(x,r):=\{(y_1,\dots,y_n):x_i -r\leq y_i\leq x_i +r , 1\leq i\leq n\}$, where $x=\{x_1,\dots,x_n\}$. Depending on the context, we will also consider $(n-1)$-dimensional cubes contained it hyperplanes. 

When $2^{j}\leq C_2<2^{j+1}$ for some $j\in \Z$, we write $C_1\approx C_2$ with $C_1=2^j$. Throughout this paper, by a covering of any set $E$, we mean a finite set of elements indexed by $J$,  $\{E_j\}|_{j\geq J}$ so that we have the disjoint union $E=\sqcup_{j\geq 1}E_j$ (i.e. for any two $E_j, E_k$ with $j\neq k$, $j,k\in J$ , we have $E_j \cap E_k=\phi$). Finally, we denote the null set by $\phi$.

\subsection{Table of notation}
\label{sec:notation-table}
For ease of reference, we collect the notation used throughout the paper below.

%

\begin{longtable}{@{}p{0.20\textwidth}p{0.72\textwidth}@{}}
\toprule
\textbf{Symbol} & \textbf{Meaning} \\
\midrule
\endfirsthead
\multicolumn{2}{@{}l}{\textit{(continued)}} \\
\toprule
\textbf{Symbol} & \textbf{Meaning} \\
\midrule
\endhead
\bottomrule
\endfoot

\multicolumn{2}{@{}l}{\textbf{Domain and boundary}} \\[2pt]
$\Omega \subset \mathbb{R}^n$ & the domain under consideration, $n \geq 3$ \\
$\partial \Omega$ & boundary of $\Omega$ \\
$s$ & Ahlfors--David dimension of $\partial\Omega$, $n-1-\delta_0 \leq s \leq n-1$ \\
$\delta$ & $\delta := n-1-s$, codimension defect from $n-1$ \\
$\delta_0$ & explicit threshold on $\delta$ below which the dimension-drop argument applies \\
$C_1$ & Ahlfors--David regularity constant of $\partial\Omega$ (Eq.~(2), (2$'$)) \\
$\beta$ & uniform non-flatness parameter (Eq.~(1)) \\
$\beta_1$ & $\beta_1 := 1/(4N_\beta+2)$ \\
$N_\beta$ & $N_\beta := \lceil 1/\beta \rceil$ \\
$V$ & an $(n-1)$-dimensional plane in $\mathbb{R}^n$ \\
$b^\beta_\omega(x,r)$ & uniform non-flatness coefficient at $x$, scale $r$ (Eq.~(1)/(1$'$)) \\
$K$ & exceptional set of dimension $< s$ carrying full harmonic measure \\

\addlinespace
\multicolumn{2}{@{}l}{\textbf{Measures and densities}} \\[2pt]
$\sigma$ & surface measure, $\sigma := \mathcal{H}^s|_{\partial\Omega}$ \\
$\omega = \omega_\Omega$ & harmonic measure of $\Omega$ (pole suppressed when clear from context) \\
$\omega^x,\ \omega^p$ & harmonic measure with pole $x$ (resp.\ $p$) \\
$\widetilde\omega,\ \omega_{\Omega\setminus B_0}$ & harmonic measure of the subdomain $\Omega\setminus B_0$ \\
$\dim(\mu)$ & Hausdorff dimension of a Borel measure $\mu$ \\
$\Theta^s_\mu(B)$ & $:= \mu(B)/r(B)^s$, normalized density of $\mu$ on ball $B$ \\
$\Theta(Q),\ \Theta_{\omega^x}(Q)$ & shorthand for $\Theta^s_{\omega^x}(Q)$, density of $\omega^x$ on cube $Q$ \\
$k^t(Q)$ & $:= \omega^t_\Omega(Q)/\sigma(Q)$, density of the harmonic measure with pole $t$ on $Q$ \\

\addlinespace
\multicolumn{2}{@{}l}{\textbf{Points and distances}} \\[2pt]
$p,\ p_0,\ p_1$ & poles of harmonic measure / interior points of $\Omega$ \\
$q_1 = \widehat{p_1}$ & point of $\partial\Omega$ nearest to $p_1$ \\
$r_1$ & $:= |p_1 - q_1|$ \\
$\widehat{x}$ & point of $\partial\Omega$ nearest to $x$ \\
$\delta(x),\ \delta_\Omega(x)$ & distance from $x \in \Omega$ to $\partial\Omega$ \\
$A(x,r)$ & interior corkscrew point relative to the surface ball $\Delta(x,r)$ \\
$c(Q)$ & ``center'' of the dyadic cube $Q$ (Theorem~3.2) \\

\addlinespace
\multicolumn{2}{@{}l}{\textbf{Cubes, balls, and dyadic structure}} \\[2pt]
$B(x,r)$ & Euclidean ball of radius $r$ centered at $x$ \\
$C(x,r)$ & Euclidean cube $\{y : |y_i - x_i| \leq r,\ 1\le i \le n\}$ \\
$\Delta(x,r)$ & surface ball, $\Delta(x,r) := B(x,r)\cap\partial\Omega$ \\
$\Delta_j$ & the David--Semmes dyadic decomposition of $\partial\Omega$ at scale $j$ \\
$l(Q)$ & sidelength of the dyadic cube $Q$, $l(Q) = 2^j$ \\
$B_Q$ & surface ball associated to $Q$, $B_Q := \Delta(c(Q), C_1^{-1}\operatorname{diam}Q)$ \\
$Q,\ U$ & dyadic cubes; $U$ typically denotes a cube containing $c(Q)$ with $l(U) = C'^{-1}l(Q)$ \\
$\kappa$ & dyadic scaling ratio, $0 < \kappa \leq 1/2$, fixed via Eq.~(17) \\
$A_m(q_1)$ & $m$-th annular region, $:= C(q_1,\kappa^m t_1)\setminus C(q_1,\kappa^{m+1}t_1)$ \\
$t_1$ & radius of the initial cube $C(q_1,t_1)$, small relative to $r_1$ (Eq.~(14)) \\

\addlinespace
\multicolumn{2}{@{}l}{\textbf{Planes and normal vectors}} \\[2pt]
$\Sigma_1,\ \Sigma_2,\ \Sigma_3$ & auxiliary hyperplanes through $q_1$ (or $q_2$) used in the gradient argument \\
$H_1^+,\ H_1^-$ & half-spaces determined by $\Sigma_1$ \\
$\widehat{v}_1,\ \widehat{u}_2,\ \widehat{\nu}$ & unit normal vectors to $\Sigma_1,\ \Sigma_2,\ \Sigma_3$ respectively \\

\addlinespace
\multicolumn{2}{@{}l}{\textbf{Green's function and potential theory}} \\[2pt]
$G(X,Y)$ & Green's function for $\Omega$ (Eq.~(3)) \\
$E(X)$ & $:= c_n|X|^{1-n}$, fundamental solution of the Laplacian in $\mathbb{R}^n$ \\
$E(x,y)$ & contribution at $x$ from the fundamental solution with pole at $y$ \\
$\delta_1 G,\ \delta_2 G$ & boundary-measure and pole contributions to the potential difference (Eq.~(21),(22)) \\
$I_{near},\ I_{mid},\ I_{far}$ & near-, intermediate-, and far-field contributions to $\delta_1 G$ (Eq.~(21)) \\
$\nabla_{A_m}$ & gradient at $q_1$ due to the harmonic measure in $A_m$ \\

\addlinespace
\multicolumn{2}{@{}l}{\textbf{Iteration parameters and auxiliary constants}} \\[2pt]
$M,\ M_1,\ M_2$ & density growth factors; $M$ large and given, $M_1,M_2$ derived from it \\
$w$ & integer exponent controlling the separation from $\Sigma_1$ (Lemma~5, depends on $\beta$) \\
$\gamma$ & sub-cube scaling parameter, $0 < \gamma \leq 1/2$ \\
$N,\ N_1,\ N_2$ & iteration counts / length-scale exponents fixed in the course of the proof \\
$\theta_0,\ \theta_1$ & separation parameters with $\theta_0 = \beta_1^{2w} \leq \theta_1 \leq 1$ \\
$\eta,\ \eta_1,\ \eta_2$ & proportions of surface measure occupied by a sub-cube (Lemma~7) \\
$j_0$ & length-scale exponent of the dyadic cube produced by the main density-increment argument \\
$\chi$ & constant appearing in the conclusion of Theorem~5.1 \\
$\epsilon$ & small parameter bounding $\delta_\Omega(x)$ from below in Theorem~5.1 \\
$\alpha$ & H\"older exponent in the CDC estimate (Lemma~3) \\
$\tau$ & small-boundary parameter in the dyadic grid property, Theorem~3.1(4) \\

\addlinespace
\multicolumn{2}{@{}l}{\textbf{General conventions}} \\[2pt]
$C,\ C_i\ (i\geq 1)$ & generic constants whose value may change from line to line \\
$C_1 \approx C_2$ & $C_1, C_2$ comparable in the dyadic sense: $C_1 = 2^j$ for the $j$ with $2^j \leq C_2 < 2^{j+1}$ \\
$\sqcup$ & disjoint union \\
$\phi$ & the null (empty) set \\

\end{longtable}

\section{Proof of the theorem.}

Without loss of generality, in this paper we consider the pole $p$ of the harmonic measure to be located as some corkscrew point $A(x,r_0)$. All the balls and dyadic cubes under subsequent consideration have diameters less than this fixed $r_0$.




    For the following argument, for any point $p_1 \in \Omega$, we consider a point $q_1=\widehat{p_1}$ on the boundary $\partial\Omega$ that is closest to $p_1$, and $|p_1 -q_1|=r_1$

      We consider this 'corner' point $q_1$, and a ball of radius $t_1$ sufficiently small in comparison to $r_1$ so that for a fixed large positive integer $w$, we have, 
      \begin{align}\label{separate}
      \frac{t_1}{r_1}= c (\beta_1)^{w(1+s)}< c(\beta_1)^{w}.
      \end{align}
      
      for some sufficiently small uniform constant $c\ll 1$, and a large positive constant $w$ also dependent on $\beta$, to be determined later.

      \begin{lemma}
          Consider the cube $C(q_1,t_1)$. Then there exists a $1 \leq \kappa\leq \frac{1}{2}$ so that we have the following decomposition of the ball $C(q_1,t_1):=\cup_{m\geq 0}C(q_1,\kappa^m t_1)\setminus C(q_1,\kappa^{m+1}t_1) :=A(q_1,m)$, so that, 
        \begin{align}\label{eq10}
    \frac{1}{2}  C_{1}^{-1} \kappa^{ms}t_{1}^{s}\leq \sigma(A_m (q_1))\leq C_1 \kappa^{ms} t_{1}^s
\end{align}
      \end{lemma}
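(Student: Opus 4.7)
The plan is to obtain $\sigma(A(q_1,m))$ by subtracting the AD-regularity estimates \cref{adr} applied to the two concentric cubes $C(q_1,\kappa^m t_1)$ and $C(q_1,\kappa^{m+1}t_1)$, and then to pick $\kappa$ small enough that the inner cube captures only a controlled fraction of the outer one. Since $q_1\in\partial\Omega$ and the shells $A(q_1,m)$ are pairwise disjoint with union $C(q_1,t_1)\setminus\{q_1\}$, and since $\sigma(\{q_1\})=0$ (as $s>0$, so the singleton is AD-negligible), this genuinely partitions $C(q_1,t_1)\cap\partial\Omega$ up to a set of $\sigma$-measure zero, which is what the decomposition claim requires.

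The upper bound in \cref{eq10} is immediate from monotonicity and the upper estimate of \cref{adr}: $\sigma(A(q_1,m)) \leq \sigma(C(q_1,\kappa^m t_1)) \leq C_1\kappa^{ms}t_1^s$. For the lower bound I would subtract the two AD-regularity estimates,
\[
\sigma(A(q_1,m)) \;=\; \sigma(C(q_1,\kappa^m t_1)) - \sigma(C(q_1,\kappa^{m+1}t_1)) \;\geq\; C_1^{-1}\kappa^{ms}t_1^s - C_1\kappa^{(m+1)s}t_1^s \;=\; \kappa^{ms}t_1^s\bigl(C_1^{-1} - C_1\kappa^s\bigr).
\]
To force the parenthesis to be at least $\tfrac{1}{2}C_1^{-1}$, it suffices to impose $C_1\kappa^s \leq \tfrac{1}{2}C_1^{-1}$, i.e.\ $\kappa \leq (2C_1^2)^{-1/s}$. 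Since the admissible range $s\in[n-1-\delta_0,n-1]$ stays bounded away from $0$, I would fix $\kappa := (2C_1^2)^{-1/(n-1-\delta_0)}$, so that $\kappa$ is uniform in $s$ and depends only on $n$, $C_1$, and $\delta_0$.

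There is no substantive obstacle here: the whole argument is two applications of \cref{adr} plus one elementary inequality. The only minor points to be mindful of when writing up are that $\kappa$ must be strictly less than $1$ so that the annular shells do not collapse (automatic from $C_1\geq 1$, which forces $(2C_1^2)^{-1/s} < 1$), and that $\kappa$ should be chosen uniformly in the admissible range of $s$ so that the geometric factor $\kappa^{ms}$ in \cref{eq10} can later be summed without the constants degrading in the subsequent corner-point argument.
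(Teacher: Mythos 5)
Your computation is exactly the paper's: the upper bound by monotonicity, and the lower bound by subtracting the two AD-regularity estimates for $C(q_1,\kappa^m t_1)$ and $C(q_1,\kappa^{m+1}t_1)$, reduced to the single condition $C_1\kappa^s \leq \tfrac12 C_1^{-1}$. Where you diverge is the final choice of $\kappa$. You pick $\kappa = (2C_1^2)^{-1/(n-1-\delta_0)}$, optimizing for uniformity over the admissible range of $s$, which is harmless for this lemma in isolation. The paper instead takes $\kappa = \tfrac{1}{4\lceil C_1^{2/s}\rceil + 2}$, i.e.\ the reciprocal of an integer. That extra structure is not needed here — any $\kappa$ below the threshold works — but it is needed later: in the proof of \cref{lemma6} the annulus $A_m$ is decomposed as a disjoint union of Euclidean sub-cubes of radius $\kappa^{m+1}t_1$, and for such a grid tiling of $C(q_1,\kappa^m t_1)$ to exist, $1/\kappa$ must be an integer. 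Your generic real $\kappa$ will not tile, so you would have to revise the choice when you reach that step. Aside from that forward-looking constraint, your argument is correct, and your observation that $\sigma(\{q_1\})=0$ (so the shells partition $C(q_1,t_1)\cap\partial\Omega$ up to a $\sigma$-null set) is a detail the paper leaves implicit.
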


      \begin{proof}Consider the following decomposition of this cube $B(q_1,t_1)$: depending on the Ahlfors David regularity constant $C_1$. We consider the value $0<\kappa<1$ so that,
      \begin{align}\label{ample}
            C_{1}^{-1} t_{1}^s -C_{1}(\kappa t)^s \geq \frac{1}{2}  C_{1}^{-1} t_{1}^{s}\Leftrightarrow \kappa \leq C_{1}^{-2/s}.
      \end{align}

     Consider the unique positive integer $N$ so that, $\frac{1}{N+1}<C_{1}^{-2/s}\leq \frac{1}{N}$. Thus, $N=\lceil C_{1}^{2/s} \rceil$. We choose a value of 
     \begin{align}\label{kappa}
    \kappa=\frac{1}{4N+2}= \frac{1}{4\lceil C_{1}^{2/s} \rceil +2}.
    \end{align}
     In particular, we have $\kappa\leq \frac{1}{2}$.
     
     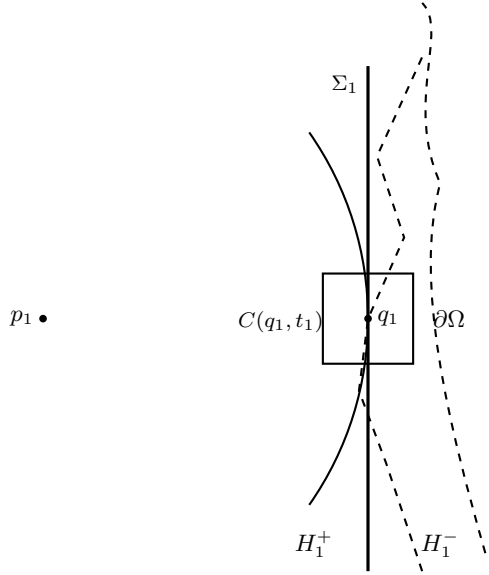
\begin{figure}
     \centering

\begin{tikzpicture}[
    scale=1.2,
    every node/.style={font=\small}
]

\def\r{3.6}
\def\tone{1}

\coordinate (p1) at (0,0);
\coordinate (q1) at (\r,0);

\draw[thick]
  ({\r*cos(-35)}, {\r*sin(-35)})
  arc[start angle=-35, end angle=35, radius=\r];

\draw[very thick] (\r,-2.8) -- (\r,2.8);
\node[above left, font=\footnotesize] at (\r,2.4) {$\Sigma_1$};


\draw[dashed, thick]
  (\r+0.6,-2.8)
    -- (\r+0.2,-1.6)
    -- (\r-0.1,-0.8)
    -- (q1)
    -- (\r+0.4,0.9)
    -- (\r+0.1,1.8)
    -- (\r+0.6,2.9);

\draw[dashed, thick]
  (\r+1.3,-2.6)
    .. controls (\r+0.9,-1.2) and (\r+0.5,0.4) ..
  (\r+0.8,1.5)
    .. controls (\r+0.4,2.4) and (\r+0.9,3.2) ..
  (\r+0.6,3.5);

\fill (p1) circle (1.2pt);
\node[left] at (p1) {$p_1$};

\fill (q1) circle (1.2pt);
\node[right] at (q1) {$q_1$};

\node at (\r-0.6,-2.5) {$H_1^{+}$};
\node at (\r+0.8,-2.5) {$H_1^{-}$};
\node at (\r+0.9,0) {$\partial\Omega$};

\draw[thick]
  (\r-0.5,-0.5) rectangle (\r+0.5,0.5);

\node[above right, font=\footnotesize]
  at (\r-1.54,-0.25) {$C(q_1,t_1)$};

\end{tikzpicture}

\caption{The corkscrew point $p_1$ and a point $q_1\in\partial\Omega$ nearest to $p_1$ is shown in the figure. The hyperplane $\Sigma_1$ passes through $q_1$ and is perpendicular to the line joining $p_1,q_1$, and the cube $C(q_1,t_1)$ is also shown. The left and right half planes in the figure are respectively $H^{+}_1, H^{-}_1$. The arc shows a portion of the sphere $S(p_1, r_1)$. The dashed lines depict the boundary $\partial\Omega$, which lies in the complement of the ball $B(p_1,r_1)$.}
\end{figure}

This ensures that for the annular region $\sigma(C(q_1,t_1)\setminus C(q_1,\kappa t_1))\geq \frac{1}{2}  C_{1}^{-1} t_{1}^{s} $. Further, we trivially have an upper bound of $\sigma(C(q_1,t_1)\setminus C(q_1,\kappa t_1))\leq C_1 t_{1}^s$, for the amount of surface measure within this region. Thus we have, 
\begin{align}\label{eq6}
    \frac{1}{2}  C_{1}^{-1} t_{1}^{s}\leq \sigma(C(q_1,t_1)\setminus C(q_1,\kappa t_1))\leq C_1 t_{1}^s
\end{align}

Successively, we consider for each integer $m\geq 0$, the annular regions, 
\begin{align}
    A_m (q_1):= C(q_1,\kappa ^m t_1)\setminus C(q_1,\kappa^{m+1} t_1).
\end{align}

By the same argument as that for \cref{eq6}, we also get that,
\begin{align}\label{eq10}
    \frac{1}{2}  C_{1}^{-1} \kappa^{ms}t_{1}^{s}\leq \sigma(A_m (q_1))\leq C_1 \kappa^{ms} t_{1}^s
\end{align}\end{proof}

Within each such annulus $A_m$, consider the gradient at the point $q_1$ due to the surface measure within $A_m$. We use the condition arising from \cref{ample} above, the \cref{bilateral} conditions.

Consider the difference in the potential between the points $q_1, q_2\in \partial\Omega$ due to the harmonic measure on $\partial\Omega$.

\begin{multline}\label{decomposition}
\delta_1 G(q_1,q_2)
=
\underbrace{\int_{\mathcal{C}(B(q_1,\kappa^N t_1))} \bigl(\mathcal{E}(q_1,y)-\mathcal{E}(q_2,y)\bigr)\,d\omega(y)}_{I_{\mathrm{near}}}
+
\underbrace{\int_{\mathcal{C}(B(q_1,t_1)\setminus \mathcal{C}(B(q_1,\kappa^{N}t_1)} \bigl(\mathcal{E}(q_1,y)-\mathcal{E}(q_2,y)\bigr)\,d\omega(y)}_{I_{\mathrm{mid}}}
\\+
\underbrace{\int_{\partial\Omega\setminus \mathcal{C}(B(q_1,t_1)} \bigl(\mathcal{E}(q_1,y)-\mathcal{E}(q_2,y)\bigr)\,d\omega(y)}_{I_{\mathrm{far}}}.
\end{multline}

 Recall that $\mathcal{E}(x,y)$ is the contribution at $x$, due to the fundamental solution with the pole at $y$. Here we have considered the sets $ \mathcal{C}(B(q_1,t_1)), \mathcal{C}(B(q_1,\kappa^N t_1))$ as the minimal covers of $B(q_1, t_1),B(q_1,\kappa^N t_1)$ respectively, by dyadic cubes of length $2^{j_0}$ for the $j_0$ to be determined\footnote{See, for example, \cref{eq70} for the precise value of $j_0$ for the case of $s=n-1$, in terms of the $\kappa$ computed in \cref{kappa}.}. Similarly, we define $\mathcal{C}(B(q_1,t_1))$.

 Along with this, we have a contribution $\delta_2 G(q_1,q_2)=-\mathcal{E}(q_1,p_0)+\mathcal{E}(q_2,p_0)$, due to the pole itself located at $p_0$, to the difference in the potential between the points $q_0,q_1$. Since we have Dirichlet boundary conditions, we then must have 
 \begin{align}
     \delta_1 G(q_1,q_2)+\delta_2 G(q_1,q_2)=0
 \end{align}
By construction $p_0$ will lie outside all these balls $B(q_1,t_1)$ under consideration, and we will treat the contribution due to the pole at $p_0$ alongside the contribution from $I_{far}$, as will be clear from the argument.
 
 We estimate separately, the near-field term $I_{\mathrm{near}}$,
the intermediate contribution $I_{\mathrm{mid}}$,
and the far-field term $I_{\mathrm{far}}$ along with $\delta_2G(q_1,q_2)$. 
 
First consider the contribution from $I_{mid}$. For each of the dyadic cubes contained in each of the annular regions in $I_{mid}$, the contribution due to the gradient term will dominate the second order contribution from these dyadic cubes.
 
 Note a trivial upper bound to the contribution to the gradient at $q_1$ due to the harmonic measure in $A_m$: since the total harmonic measure in the annular region $A_m$ is trivially upper bounded by $MC_{1} (\kappa^m t_{1})^s$, we get a contribution to the gradient whose magnitude is upper bounded by,
 \begin{align}\label{upperbound}
     \frac{MC_{2} \kappa^{ms} t_{1}^s}{(\kappa^{m} t)^{n-1}}.
 \end{align}
    for some constant $C_2$. 

   Consider the $(n-1)$- dimensional plane perpendicular to the line joining the points $p_1,q_1$, and passing through $q_1$ . Call this plane $\Sigma_1$. We write $H_{1}^{+}$ for the half plane with the unit outward normal vector $\hat{v_1}$ pointing from $q_1$ towards $p_1$, and the other half plane as $H_{1}^{-}$. 
   
    Now we are interested in a lower bound to the magnitude of the gradient due to $A_m$, at $q_1$. We we use the condition \cref{bilateral} for this.

    First note that if the entire surface measure was concentrated arbitrarily close to $A_m \cap \Sigma_1$, then one might have enough cancellations in the contribution to the gradient at $q_1$, along the plane $\Sigma_1$, and thus get a negligible contribution to the gradient. 
    
    However, using the uniform non-flatness condition of \cref{bilateral,adr}, along with \cref{separate}, we prove the following. 
    
    \begin{lemma}\label{lemma6}
    There exists a fixed positive integer $w$ dependent on $\beta$, so that for each $m\geq 0$ there exists a point in $\partial\Omega\cap H^{-}_{1}\cap A_m$ which is at a distance at least $\beta_1^w \kappa^{m+1} t_1$ from $\Sigma_1$. 
    
    
    Here, we have,
    \begin{align}
           w=\max \{1, \Big\lfloor \frac{-\log 2 -2\log C_2 -\log((\frac{1}{\kappa})^{n-1}-1) }{\log(1-\beta_1^{n-1})+(s-n+1)\log \beta_1)}\Big\rfloor \}.
       \end{align}

    \end{lemma}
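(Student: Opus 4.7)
I argue by contradiction: suppose every point of $\partial\Omega\cap H^{-}_{1}\cap A_m$ lies within distance $\eta := \beta_1^w\kappa^{m+1}t_1$ of $\Sigma_1$. A preliminary geometric reduction uses the corkscrew condition $|p_1-q_1|=r_1$ together with $\partial\Omega\cap B(p_1,r_1)=\emptyset$: writing $y-q_1 = a\hat v_1 + v^{\perp}$ for any $y\in\partial\Omega\cap B(q_1,t_1)$, the Pythagorean expansion of $|y-p_1|^2\ge r_1^2$ gives $a\le (a^2+|v^{\perp}|^2)/(2r_1)\le C t_1^2/r_1$ whenever $y\in H^{+}_{1}$ (i.e.\ $a>0$). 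Using the relation $t_1/r_1 = c\beta_1^{w(1+s)}$ from \cref{separate}, this deviation is negligible compared to $\eta$ at the relevant scales, and the hypothesis amounts to $\partial\Omega\cap A_m\subseteq N_\eta(\Sigma_1)$ --- the boundary hugs $\Sigma_1$ at scale $\eta$.

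\textbf{Iterative hole extraction from the bilateral condition.} Set $\rho := \eta/\beta_1 = \beta_1^{w-1}\kappa^{m+1}t_1$, and for each point $z$ on $\Sigma_1\cap A_m$ chosen from an interior sub-annulus with $\rho$-buffer so that $C(z,\rho)\subset A_m$, apply the bilateral condition \cref{bilateral'} to $C(z,\rho)$ with $V=\Sigma_1$. This yields a dichotomy: (Case A) there is $y^{*}\in C(z,\rho)\cap\partial\Omega$ with $\operatorname{dist}(y^{*},\Sigma_1)\ge (\beta/2)\rho \ge 2\beta_1\rho = 2\eta$, which immediately contradicts the strip assumption and furnishes the desired point (since $y^{*}\in A_m$ and, by the reduction, lies in $H^{-}_{1}$); or (Case B) there is $z^{*}\in\Sigma_1\cap C(z,\rho)$ with $B(z^{*},(\beta/2)\rho)\cap\partial\Omega=\emptyset$, producing a ``hole'' of radius $\gtrsim \rho$ centered on $\Sigma_1$ carrying no surface measure. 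I iterate this dichotomy in waves: at each wave, select well-separated test points in the portion of $\Sigma_1\cap A_m$ not yet covered by previously generated holes. If Case A never occurs, the packing density of disjoint holes at the fixed scale $\rho$ removes a constant fraction of order $\beta_1^{n-1}$ of the currently uncovered $(n-1)$-dimensional area of $\Sigma_1\cap A_m$ per wave.

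\textbf{Contradiction and the formula for $w$.} After $w$ waves, the uncovered $(n-1)$-dim area of $\Sigma_1\cap A_m$ is at most $(1-c\beta_1^{n-1})^{w}(1-\kappa^{n-1})R^{n-1}$, where $R := \kappa^m t_1$. Since $\partial\Omega\cap A_m$ sits inside the $\eta$-strip around this uncovered region, covering by balls of radius $\eta$ and invoking the upper AD bound \cref{adr} gives
\[
\sigma(\partial\Omega\cap A_m)\;\lesssim\; C_1\,\eta^{s-n+1}\,(1-c\beta_1^{n-1})^{w}\,(1-\kappa^{n-1})\,R^{n-1}.
\]
Comparing with the lower bound $\sigma(A_m)\ge \tfrac12 C_1^{-1} R^s$ from \cref{eq10} and substituting $\eta/R = \beta_1^{w}\kappa$, the contradiction reduces to requiring $[(1-\beta_1^{n-1})\beta_1^{s-n+1}]^{w}$ to be small relative to a quantity of the form $\kappa^{n-1-s}/(2C^{2}(1-\kappa^{n-1}))$. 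Taking logarithms and solving for $w$ produces precisely the stated formula (with the $\kappa$-powers gathered into $((1/\kappa)^{n-1}-1)$ and the combined AD and packing constants absorbed into $C_2$). Hence Case A must be triggered at some wave, producing the required point.

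\textbf{Main obstacle.} The principal technical difficulty is the iterative book-keeping: ensuring the holes across different waves remain mutually disjoint, that every bilateral-tested cube $C(z,\rho)$ stays inside $A_m$ (handled by the interior $\rho$-buffered sub-annulus), and that the per-wave area-reduction factor is uniform. The delicate interplay between $\beta_1^{n-1}$ (from $(n-1)$-dim packing density in $\Sigma_1$) and $\beta_1^{s-n+1}$ (from the codimension-mismatch factor $\eta^{s-n+1}$ when $s<n-1$) is what generates the combined factor $(1-\beta_1^{n-1})\beta_1^{s-n+1}$ in the denominator of $w$; controlling this, together with the ``very-close-to-$q_1$'' regime where $t_1^2/r_1$ may exceed $\eta$ and one must be more careful with the $H^{+}_{1}$-reduction, are the quantitative heart of the argument.
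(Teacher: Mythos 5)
Your overall strategy coincides with the paper's proof of \cref{lemma6}: flatten the $H_1^{+}$ side via the emptiness of $B(p_1,r_1)$ together with \cref{separate}, feed this into the bilateral condition \cref{bilateral} with $V=\Sigma_1$ so that its second term must be large, extract measure-free holes on $\Sigma_1\cap A_m$, and after $w$ iterations contradict the lower Ahlfors bound of \cref{eq10}. Your final comparison of $[(1-\beta_1^{n-1})\beta_1^{\,s-n+1}]^{w}$ against $\kappa^{n-1-s}/(2C^{2}(1-\kappa^{n-1}))$ is exactly \cref{eq20} and yields the stated formula for $w$.

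The genuine gap is in the iteration mechanism. You run all $w$ ``waves'' at the single fixed scale $\rho=\beta_1^{w-1}\kappa^{m+1}t_1$ and assert that each wave removes a further fraction $\sim\beta_1^{n-1}$ of the still-uncovered area of $\Sigma_1\cap A_m$. From the second wave on this is not forced: a test cube $C(z,\rho)$ centered at a not-yet-covered point $z$ will in general contain holes produced in earlier waves, and Case B of your dichotomy only guarantees \emph{some} point of $\Sigma_1\cap C(z,\rho)$ far from $\partial\Omega$ --- nothing prevents that point from lying inside an old hole, in which case the wave removes no new area and the claimed decay $(1-c\beta_1^{n-1})^{w}$ fails. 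You name ``holes across different waves remain mutually disjoint'' as the principal difficulty but supply no mechanism for it. The paper resolves this by decreasing the scale geometrically: at stage $k$ the surviving set is an exact disjoint union of cubes of radius $\beta_1^{k-1}\kappa^{m+1}t_1$, the bilateral condition is applied to each such cube, and the resulting hole is taken to be a sub-cube of radius $\beta_1^{k}\kappa^{m+1}t_1$ belonging to a tiling of that cube; disjointness from all earlier holes is then automatic from the nesting, and each surviving cube loses one of its $\beta_1^{-(n-1)}$ children, which is precisely the source of the $(\beta_1^{-(n-1)}-1)^{w}$ count of final cubes of radius $\beta_1^{w}\kappa^{m+1}t_1$ in \cref{eq20}. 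With this decreasing-scale bookkeeping substituted for your fixed-scale waves, the rest of your argument (the $H_1^{+}$ reduction localized to $A_m$, the covering of the $\eta$-strip, and the solution for $w$) matches the paper.
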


\begin{proof}[Proof of \cref{lemma6}]
For each $m\geq 1$, consider the disjoint union $\mathcal{D}_m$ of $A_m\cap \Sigma_1 $ by Euclidean cubes of radius $ \kappa^{m+1} t_1$. Note from \cref{kappa} that we have chosen $\kappa=\frac{1}{4\lceil C_{1}^{2/s}\rceil+2}$, and such a covering always exists. More precisely, we have a disjoint covering $\mathcal{F}_m$ of $C(q_1,\kappa^m t_1)$ by cubes of radius $ \kappa^{m+1} t_1$ which includes the cube $C(q_1,\kappa^{m+1}t_1)$, and we have $\mathcal{D}_m=\mathcal{F}_m\setminus \{C(q_1,\kappa^{m+1}t_1)\}$.

If there exists a point in $\partial\Omega\cap H^{-}_{1}\cap A_m$ which is a distance at least $\frac{\beta}{2} \kappa^{m+1} t_1$ away from $\Sigma_1$, then we are done.
    
    Otherwise, all points in $\partial\Omega\cap H^{-}_{1}\cap A_m$ lie at most a distance $\frac{\beta}{2} \kappa^{m+1} t_1$ from $\Sigma_1$. Then, we claim that for any $C(q_2,\kappa^{m+1} t_1)\in \mathcal{D}_m$, with some center $q_2$, we have
    \begin{align}\label{eq11}
        \sup\limits_{y\in \Sigma_1 \cap C(q_2,\kappa^{m+1} t_1 ) }\frac{\text{dist}(y,\partial\Omega)}{\kappa^{m+1} t_1} \geq \frac{\beta}{2}.
    \end{align}

    To show this, note first that the ball $B(p_1,r_1)$ is empty, and the condition \cref{separate}, forces the part of $\partial\Omega$ in $H^{+}_{1}\cap A_m$ to be quantifiably closer to $\Sigma_1$ than $\frac{\beta}{2}\kappa^{m} t_1$. More precisely, an elementary calculation shows that the maximum distance of $\partial\Omega\cap H^{+}_1 \cap C(q_1,\kappa^m t_1)\cap B
    (p_1,r_1)^c$ to the plane $\Sigma_1$, where $B(p_1,r_1)^c$  is the complement of the ball $B(p_1,r_1)$, is given by $\kappa^m t_{1}^{2}/r_1$ and we have because of \cref{separate}, 
    \begin{align}\label{eq20'}
    \frac{\kappa^m t_{1}^2}{r_1}\ll \frac{\beta^w}{2} \kappa^m t_1  \ll \frac{\beta}{2} \kappa^m t_1.
    \end{align}

Thus, considering the condition \cref{bilateral} with the plane $\Sigma_1$, and the cube $C(q_2,\kappa^{m+1} t_1)\in \mathcal{D}_m$, the hypothesis that all points in $\partial\Omega\cap H^{-}_{1}\cap A_m$ lie at most a distance $\frac{\beta}{2} \kappa^{m+1} t_1$ from $\Sigma_1$, and \cref{eq20'} and the discussion preceding it, we have for the corresponding first term on the right of the expression in \cref{bilateral}, 
\begin{align}
    \sup_{y \in C(q_2,\kappa^{m+1} t_1)\cap \partial \Omega} \frac{\operatorname{dist}(y,\Sigma_1)}{\kappa^{m+1} t_1}\leq  \frac{\beta}{2},
\end{align}

and thus for the second term on the right of \cref{bilateral} we have the condition of \cref{eq11}. 


Thus, from \cref{eq11}, we get that there exists some point $y\in  C(q_2,\kappa^{m+1} t_1)\subset \Sigma_1$ so that the cube $C(y,\kappa^{m+1} t_1 \beta/2)$ does not contain any point of $\partial\Omega$\footnote{Here, $c<1$ is a geometric factor dependent on the dimension $n$.}. Here, we mean $(n-1)$ dimensional cubes contained in $\Sigma_1$. Henceforth, this should be clear from context. Note that here and in subsequent iterations of this argument in this lemma, it might happen that this chosen point $y$ is on the boundary of a cube from the previous generation that is already empty. Even if $y$ is in the interior of the cube $C(q_2,\kappa^{m+1}t_1)$, we want to only discard a subcube within $C(y,\kappa^{m+1} t_1 \beta/2)\cap C(q_2,\kappa^{m+1}t_1)$, of comparable size to $C(y,\kappa^{m+1} t_1 \beta/2)$, that belongs to a covering of $C(q_2,\kappa^{m+1} t_1)$.

We have some cube $C(q'', \kappa^{m+1} t_1 \en)\subset C(y,\kappa^{m+1} t_1 \beta/2)\cap C(q_2,\kappa^{m+1}t_1)$ with $q''\in \Sigma_1$, that does not contain any point of $\partial\Omega$. Further, because of the choice of $\en$, $ C(q'', \kappa^{m+1} t_1 \en)$ is an element of a covering of $C(q_2,\kappa^{m+1}t_1)$, with radius $\kappa^{m+1}t_1 \en$. Given the cube $C(q_2,\kappa^{m+1} t_1)$, we denote the set of such cubes, of radius $\kappa^{m+1}t_1 \en$, in this covering of $C(q_2,\kappa^{m+1} t_1)$, as $\mathcal{D}_{C(q_2,\kappa^{m+1} t_1)}$.

Upon a disjoint union over all these cubes in $\mathcal{D}_m$ we get a covering of $A_m \cap \Sigma_1$ by cubes of radius $\kappa^{m+1} t_1 \en$,
 \begin{align}
     \mathcal{D}_{m}^{(1)}:=\cup_{C(q_2,\kappa^{m+1}t_1)\in \mathcal{D}_m}\mathcal{D}_{C(q_2,\kappa^{m+1}t_1)}.
 \end{align}

  We discard all such cubes of the form $ C(q'', \kappa^{m+1} t_1 \en)$ chosen in the previous paragraph, by repeating the same argument over all the cubes of the form $C(q_2,\kappa^{m+1}t_1)\in \mathcal{D}_m$. We consider the disjoint union of boundary cubes that remains after this process, within $\Sigma_1\cap A_m$, and call that $\Sigma_2$. In particular, the cube $C(q'', \kappa^{m+1} t_1 \en)$ belongs to $\mathcal{D}_{m}^{(1)}$, and $C(q'', \kappa^{m+1} t_1 \en)\cap \Sigma_2 =\phi$ . 
 

Now, if there is a point in $\partial\Omega\cap H^{-}_1 \cap A_m$ at a distance at least $\beta \en \kappa^{m+1} t_1/2 \geq (\en)^2 \kappa^{m+1} t_1 $ away from $\Sigma_1$, then also we are done. Otherwise, assume that all points of $\partial\Omega\cap H^{-}_1 \cap A_m$ are at a distance at most $ \kappa^{m+1} t_1\en \beta/2$ from $\Sigma_1$.

In other words, we assume that, for any cube $C(y_i, \kappa^{m+1} t_1\beta_1)\in \mathcal{D}^{(1)}_m$, we have
\begin{align}
    \sup\limits_{y\in C(y_i, \kappa^{m+1} t_1\beta_1)\cap \partial\Omega} \frac{\text{dist}(y,\Sigma_1)}{\kappa^{m+1} t_1\beta_1}\leq \frac{\beta}{2}.
\end{align}


Thus, any point of $\partial\Omega\cap H^{-}_1 \cap A_m$ now belong to the closure of some cube $C(y_i, \kappa^{m+1} t_1\beta_1)$ belonging to this collection $\mathcal{D}_{m}^{(1)}$. Then for any given cube $C(y_i, \kappa^{m+1} t_1 \beta_1)\in \mathcal{D}_{m}^{(1)}$  we repeat the previous argument and discard a set of cubes of radius $ \kappa^{m+1} t_1 \beta^{2}_1$. In the process we find a disjoint collection of cubes $\mathcal{D}_{m}^{(2)}$, each now of radius $c^2 \kappa^{m+1} t_1\beta^{2}_1$, each centered on $\Sigma_1\cap A_m$, and which thus covers the remaining union of cubes that have not been discarded in $\Sigma_1\cap A_m$, and which we define as $\Sigma_3$.

Thus, by \cref{bilateral}, we get that there exists some point $y\in \Sigma_3$ so that 
\begin{align}
    \text{dist}(y,\partial\Omega)\geq \frac{\kappa^{m+1} t_1 \beta\en}{2} .
\end{align}
This gives us, as before, that a cube of the form $C(q_3, \kappa^{m+1} t_1 \beta_{1}^{2})$  does not contain any point of $\partial\Omega$, and further that this cube $C(q_3, \kappa^{m+1} t_1 \beta_{1}^{2})$ belongs to a covering of $\Sigma_3$.


Now we repeat this argument inductively. Note in particular that the argument also ensures that the intersection of these discarded cubes with $H^{+}_1$ is also empty, because the points of $\partial\Omega\cap H^{+}_1$ are quantifiably closer to $\Sigma$ compared to the radius of the cubes under consideration till the stage $w$, due to \cref{eq20',separate}.

We continue this argument for each positive integer $n$ until we reach a stage where we violate the Ahlfors regularity property of \cref{adr}. Note that we have after the $w$'th stage of the argument, by hypothesis, that all the points of $A_m\cap H^{-1}_1\cap \partial\Omega$ lie within a distance at most $\beta \beta_1^{w-1}\kappa^{m+1} t_1/2$ from $\Sigma_1$. If there is a point at least a distance $ \beta_1^{w}\kappa^{m+1} t_1/2$ away from $\Sigma_1$ within $H^{-1}_{1}$ then we are done, otherwise assume that all these points lie within a distance $ \beta_1^{w}\kappa^{m+1} t_1/2$ from $\Sigma_1$. Note that the points in $H^{+}_{1}$ are in any case closer to $\Sigma_1$ because of \cref{eq20',separate}.

Thus we choose $w$ big enough so that by using the Ahlfors-David regularity property, we have,
\begin{multline}\label{eq20}
    CC_1\Big(\frac{(\kappa^{m}t_1)^{n-1}}{(\kappa^{m+1}t_1)^{n-1}}-1\Big) \Big(\frac{1}{\beta_1^{n-1}}-1\Big)^w\Big( \beta_1^w \kappa^{m+1} t_1 \Big)^{s}=  CC_1\Big(\Big(\frac{1}{\kappa}\Big)^{n-1}-1\Big) \Big(\frac{1}{\beta_1^{n-1}}-1\Big)^w\Big( \beta_1^w \kappa^{m+1} t_1 \Big)^{s}\\ \leq \frac{1}{2}C_{1}^{{-1}} (\kappa^{m} t_1)^s.
\end{multline}

This gives us a failure of the $s-$Ahlfors-David regularity for the cube $C(q,\kappa^{m} t_1)$, and thus a contradiction. The first factor on the left is gives us the number of cubes in $C(q_1,\kappa^{m}t_1)\setminus C(q_1,\kappa^{m+1}t_1)$ excluding $C(q_1,\kappa^{m+1}t_1)$, in the disjoint covering of $C(q_1,\kappa^{m}t_1)\setminus C(q_1,\kappa^{m+1}t_1)$  by cubes of radius $\kappa^{m+1}t_1)$. The second factor on the left gives us the number of subcubes of radius $(\beta_{1}^{w} \kappa^{m+1}t_1)$ within any given cube $C(q'', \kappa^{m+1}t_1)\cap \Sigma_1$ of the disjoint cover of $C(q_1,\kappa^{m}t_1)\setminus C(q_1,\kappa^{m+1}t_1)$, after the $w$'th stage of the iteration. The remaining factor on the left hand side of \cref{eq20} gives us an upper estimate on the measure of $\partial\Omega\cap C(q''',\beta_{1}^{w}\kappa^{m+1}t_1)$, up to a constant factor\footnote{Note that in general, the upper estimate in \cref{adr} can be taken around balls with centers not necessarily lying on a point on the boundary $\partial\Omega$, up to an additional uniform constant. In this instance, we have taken the estimate on cubes centered on points on $\Sigma_1$.}.

Here, the left hand quantity is an upper bound for the total amount of surface measure contained in $H^{-}_1 \cap A_m$, by the above iteration done $w$ many times. The right hand side is a lower bound for the amount of surface measure in $A_m$, that we found from \cref{eq10}.
       Thus for $w$ sufficiently large, noting that $C_{1}\geq 1$, we ensure that
       \begin{multline}
          \kappa^s \Big(\big(\frac{1}{\kappa}\big)^{n-1}-1\Big) (1-\beta_1 ^{n-1})^w \cdot \beta_1 ^{w(s-n+1)}\leq \frac{1}{2}C_{2}^{-2} \\ \Leftrightarrow w(s-n+1)\log \beta_1+ w\log(1-\beta_1^{n-1})  \leq \log (\frac{1}{2}C_{2}^{-2}) -s\log \kappa -\log\Big((\frac{1}{\kappa})^{n-1}-1\Big)
       \end{multline}

       where we have incorporated the previous constants into the constant $C_2$.
       
       Thus we have, 
       \begin{align}
       s\log \kappa+ \log\Big(\big(\frac{1}{\kappa}\big)^{n-1}-1\Big)+   w(\log(1-(\en)^{n-1})+(s-n+1)\log \en)\leq  \log (\frac{1}{2}C_{2}^{-2}) 
       \end{align}
       
       Thus, it is enough to choose, 
       \begin{align}\label{w estimate}
           w=\max \{1, \lfloor \frac{-\log 2 -2\log C_2 -\log((\frac{1}{\kappa})^{n-1}-1) +s\log \tfrac1\kappa}{\log(1-\beta_1^{n-1})+(s-n+1)\log \beta_1)}\rfloor \}.
       \end{align}

      Recall that $\kappa$ is taken as a function of the Ahlfors-David regular constant through \cref{kappa}. Thus we have explicit bounds on $w$ in terms of the ambient parameters, and this concludes the proof of the lemma.
    \end{proof}

Note that the second term in the denominator in \cref{w estimate} is precisely $0$ in the case of $s=n-1$, which we deal with first. In that case, we have an easy estimate for $w$ in terms of $\en$ and in turn $\beta$.  For the case of $n-1-\delta_0 <s<n-1$, we use \cref{w estimate} in part, to estimate the optimal $\delta$, in a later part of the argument.
 
      Thus, we must have  a point $x_m$ of $\partial\Omega\cap A_m$ that is a distance at least $\beta_1^w \kappa^m t_1$ from $\Sigma_1$. Thus, by Ahlfors regularity, with this condition, we will have a surface measure at least $C_{1}^{-1}(\frac{\beta_1^w)\kappa^m t_1}{2})^s$ in the ball $B(x_m, \frac{\beta_1^w)\kappa^m t_1}{2})$.

      In this case, the magnitude of the gradient due to this ball $B(x_m, \frac{\beta_1^w)\kappa^m t_1}{2})$, at $q$, is bounded from below by, 
      \begin{align}\label{lowerbound}
          C_{3}M^{-1}\Big(\frac{\beta_1^w)\kappa^m t_1}{2}\Big)^s \Big(\frac{1}{\kappa^m t_1}\Big)^{n-1}.
      \end{align}
      for some constant $C_3$.

     \begin{lemma}\label{lemma9}
Fix $\eps$, an arbitrarily small constant. Given $n\geq 3$, and $C$ large enough dependent on the parameters in \cref{cubelemma}(3), any large enough $M\geq 1$ dependent on the parameters $C$ and $c_0$( from \cref{r:bourgain}), there exists a constant $\chi>0$ , so that the following holds.
Let $n-1-\delta < s \leq n-1$ and let $\Omega \subset \mathbb{R}^{n}$ be a connected domain with $s$-Ahlfors-David regular boundary, such that
\cref{adr,bilateral} hold.
Let $Q\in \partial\Omega$ be a dyadic cube, with center $c(Q)$, obtained from the decomposition given by \cref{cubelemma}. Let  $U$ be a dyadic cube containing $c(Q)$, with $l(U)=\frac{1}{C'}l(Q)$, with $C'\approx C$, $C' >1$.  Then given any $x\in \frac{1}{3} B_U\setminus \partial\Omega$, with $\delta_\Omega(x)\geq \eps l(Q)$, we have some dyadic cube $Q_1\subset Q$ so that, $l(Q_1)\geq \chi l(Q)$ and so that we have 
\begin{align}\label{eq366}
   \Theta_{\omega^x}(Q_1)=\frac{\omega^{x}(Q_1)}{\sigma(Q_1)}\geq M\frac{\omega^x(Q)}{\sigma(Q)}=M\Theta_{\omega^x}(Q).
\end{align}

Here, throughout we write $\Theta=\Theta_{\omega^{x}}^{s}$.
\end{lemma}
      
    \begin{proof}  First note that $\Theta(U)$ and $\Theta(Q)$ are comparable, depending on the constant $C$. This is because, since the pole $x\in \frac{1}{3}B_U\setminus\Omega$, we have,
    \begin{align}\label{eq37}
      \frac{1}{\sigma(U)} \geq \Theta(U)\geq \frac{c_0}{\sigma(U)}, \qquad \frac{1}{C_1\sigma(U)} \geq \Theta(Q)\geq \frac{c_0}{C_1 \sigma(U)},
    \end{align}

    for some constant $C_1$ dependent on the constant $C$. Throughout the proof, we work with $\Theta(U)$, and culminate the argument by replacing $\Theta(U)$ with $\Theta(Q)$, using \cref{eq37}, up to an additional constant factor, for all sufficiently large $M$.

      Consider $\widehat{x}$ which is a point on $\partial\Omega$ nearest to $x$. In this case, the Bourgain estimate gives us, 
            \begin{align}\label{eq200}
          \omega^x(U )=c_{U}\geq c_0,
      \end{align}

for some uniform $c_0$. Note that depending on the ambient parameters, choosing $C$ large enough, using \cref{center}, such a dyadic cube $U$ can always be found.

Consider the corkscrew point $A(U)$ relative to the ball $\frac{1}{3}B_U$, and a point $\widehat{A(U)}\in \frac{2}{3}B_U$ nearest to $A(U)$, and we denote $\widehat{A(U)}=q_1$ to match the previous notation. Because of the corkscrew condition, we get a uniform lower bound for the distance $|A(U)-\widehat{A(U)}|$ in terms of $l(Q)$ and $l(U)$, which thus suffices for our purposes.

Consider  the dyadic decomposition of $U$ by the David-Semmes cubes of radius comparable to $2^j$, attained from \cref{cubelemma} (3), for some sufficiently large negative integer $j\in \Z_{-}$ to be chosen later.

Consider the value,
\begin{align}\label{localdensity}
    \Theta(U):=\Theta_{\omega^x}(U)=\frac{c c_{U} }{( l(U))^{s}}\geq \frac{c_0}{(l(U))^{s}}.
\end{align}

We assume that for each dyadic cube $C\in \D_j(U)$ which is constructed above, we have,
\begin{align}\label{a}
  (a): \  \Theta(C):=\Theta_{\omega^x}(C)\in [M^{-1}\Theta(U), M\Theta(U)]
\end{align}


Further, we assume that for each Euclidean cube centered on $q_1$ with radius $r$, that
\begin{align}\label{b}
(b:\ \ \Theta(C(q_1,r))\leq M\Theta(U)
\end{align}

If this is not true for some Euclidean cube $B(q_1,r)$ with $r\leq l(U)$, and we have 

\begin{align}\label{contra}
\Theta(C(q_1,r))>M\Theta(U),
\end{align}
then the lemma is proved by an elementary covering of $B(q_1,r)$ by a set of dyadic cubes at the length scale $j$ with $2^{j} \leq r\leq 2^{j+1}$, since we get that for $M$ large enough dependent on $C,c_0$, one of these dyadic cubes must satisfy the condition of \cref{eq366} in which case the lemma is proved. 

More precisely, consider the covering of $C(q_1,r)$ by a set of disjoint dyadic cubes $\{Q_i:l(Q)\approx r\}$. By construction, the total diameter of the union of these disjoint dyadic cubes is bounded by a uniform multiple of $r$. If we have for each $i$, that, 
\begin{align*}
    \Theta(Q_i)<M_2 \Theta(U), 
\end{align*}
then we have, 
\begin{align*}
    \omega^x(B(q_1,r))\leq \omega^x(\cup_{i}Q_i)\leq M_2 \sigma(\cup_{i}Q_i)\Theta(U)\leq CM_2 \sigma(B(q_1,r))\Theta(U).
\end{align*}

So we have a contradiction to \cref{contra}, when choosing $M$ large enough compared to $C$ and choosing $M_2=\frac{M}{C}$.


On the other hand, we also cannot have $\Theta(B(q_1,r))\geq M\Theta(U)$ for some $r\geq 2l(U)$, since by a crude bound we must have that,
\begin{align*}
    \Theta(B(q_1,r))\leq c\frac{1}{(2 l(U))^s},
\end{align*}

and for $M$ large enough compared to $C, c_0$, this would give us a contradiction.


We write the contribution to the gradient at $q_1$, due to the harmonic measure in the annular region $A_m$ as $\nabla_{A_{m}}$ . We also write the contribution to the gradient at $q_1$, projected on to the unit outward normal $\hat{v_1}$, due to the annular region $A_m\cap H^{-}_{1}$, as $\proj  $, for each $m\geq 1$. 

From \cref{upperbound,lowerbound}, we have upper and lower bounds to the magnitude of this gradient. We consider the projection of this gradient along the outward normal to the plane $\Sigma_1$, and get with an extra $(\frac{1}{\sqrt{n}}\en)^w$ factor due to the projection, 
\begin{align}\label{ineq}
          C_{3}M^{-1}\Theta(U)\beta_1^{w}\beta_1^{ws} (\kappa^m t_1)^{s-n+1} \leq |\proj| \leq MC_{2}\Theta(U) (\kappa^{m} t_{1})^{s-n+1}.
\end{align}
Note that the projection of this gradient, due to the harmonic measure in the ball $B(x_m, \frac{\beta_1^w\kappa^m t_1}{2})$, on to the direction $\hat{v}$, gives rise to an additional factor of $\beta_1^w$ in the left hand inequality in \cref{ineq} \footnote{Of course, if we have elements of the surface measure further away from the plane $\Sigma_1$, then we would have a factor bigger than $\beta_1^w$ for the projection. }. Note also that we may have several balls such as $B(x_m, \frac{\beta_1^w\kappa^m t_1}{2})$ , at least a distance $\beta_1^w \kappa^m t_1$ away from $\Sigma_1$, which gives rise to cancellations in the gradient. However, the gradient along the unit normal vector $\hat{v}$ only gets reinforced in such a case, and we have the lower bound attained in \cref{ineq}. 

Note that we may have a contribution to the gradient at $q_1$, due to the harmonic measure of
$\partial \Omega \cap A_m \cap H_1^+$.
By an identical argument as for the left hand side of \eqref{ineq}, we get that the magnitude of the gradient
due to this part, projected onto the inward normal $-\hat{\nu}$, is bounded from above by,
\begin{align}\label{eq43}
 C' \frac{t_1}{r_1} \Theta(U) (C_1 k^m t_1)^{s} \frac{1}{(k^m t_1)^{n+1}}
= \Theta(U) \frac{C t_1}{r_1} (k^m t_1)^{s-n-1},
\end{align}
for some uniform constant $C$.
Since by \eqref{separate}, we have
\begin{align}
\frac{t_1}{r_1} \ll \beta_1^{\omega(1+s)},
\end{align}
the contribution of this is negligible compared to the left hand side of \eqref{ineq}, and thus by slightly
altering the constant $C_3$ in \eqref{ineq}, depending on the constant $C$ in \eqref{separate}, we retain
the lower bound obtained in \eqref{separate}.




\begin{itemize}
\item 
\textbf{The case $s=n-1$.} We treat the case $s=n-1$ first.

The magnitude of the component of the gradient at $q_1$ along $\hat v_i$, due to the first $N$ annular regions contained in
$B(q_1,t_1)\cap H^{-}_1$, i.e.\ $\{A_m \cap H^{-}_1\}_{m=0}^{N-1}$,
is bounded from above and below by
\[
C_3 M^{-1} N \Theta(U) \beta_1^{2w}
\;\le\;
|\sum_{m=0}^{N-1} \proj|
\;\le\;
M N \Theta(U) C_2 .
\]

Now consider the cube $C(q_1,\kappa^N t_1)$, and a subcube
$C(q_1,\gamma k^N t_1)$ for some $\gamma\leq 1/2$. For any $x \in C(q_1,\gamma k^N t_1)$ and
$y \in \Omega \setminus C(q_1,\kappa^N t_1)$, using \cref{mult expansion}, the double derivative of
$\mathcal{E}(x-y)$ in any coordinate direction, for fixed $y \in \Omega \setminus C(q_1,\kappa^N t_1)$, as a function of $x$,
is bounded from above by
\begin{align}\label{eq43}
\frac{c}{|x-y|^n}.
\end{align}

Note the standing assumptions in \cref{a,b}. So we have that in \cref{taylor} the second derivative contribution at $x$, for some $x \in C(q_1,\gamma \kappa^N t_1)$, in any coordinate direction, due to the harmonic measure in the region
$C(q_1,t_1) \setminus C(q_1,k^N t_1)$, is bounded in magnitude, up to a constant $C_4$, by,
\begin{align}\label{secondderivative}
\sum_{j=0}^{\infty} \frac{MC_3\Theta(U)}{( \kappa^N t_1) 2^j}
\;=\;
\frac{MC_4 \Theta(U))}{k^N t_1}.
\end{align}

By essentially the same argument, the second derivative contribution at $x\in C(q_1,\gamma \kappa^{N}t_1)$ due the harmonic measure in $\Omega\setminus C(q_1,t_1)$ is bounded from above by the same expression $\frac{MC_4 \Theta(U))}{ t_1}< \frac{MC_4 \Theta(U))}{ t_1}$.

We add to it the contribution to the second derivative, due to the pole at $x$, which also adds another term of the form $\frac{c_5 \Theta(U)}{\kappa^N t_1}$, noting that the distance from the pole $x$ to $B(q_1,\kappa^N t_1)$ is much larger than $\kappa^N t_1$, and thus we keep the right hand term with a slightly altered constant $c_4$ in \cref{secondderivative}. 




We choose below a point $q_2 \in B(q_1,\gamma \kappa^N t_1)$ that is some distance
$\theta_1 \gamma \kappa^N t_1$ from $q_1$, in the manner to be described now, with $1 > \theta_1 > \theta_0$, and
\begin{align}\label{eq45}
\theta_0 = \beta_1^{2w}.
\end{align}

We consider the plane passing through $q_1$ and perpendicular to the gradient due to the harmonic measure in $(C(q_1,t_1)\setminus C(q_1,\kappa^N t_1)) \cap \partial\Omega$,  which we write as $\nabla A_N$.
\begin{align}
\nabla A_N=  \sum_{m=0}^{N-1} \nabla A_m.
\end{align}
Call this plane $\Sigma_{2}$, and denote by $\hat u_{2}$ the unit normal to it
(we choose the direction $\hat u_{2}$ so that $\hat u_{2}\cdot \hat v > 0$).
By employing the argument of \cref{lemma6} exactly, we ensure that the point $q_2$ is at a
distance at least $\beta_1^{w}\gamma \kappa^N t_1$ from $\Sigma_{2}$, inside the cube $C(q_1,\gamma\kappa^N t_1)$.
 
Next, consider the gradient at $q_1$ due to the harmonic measure in $\Omega \setminus C(q_1, t_1)$, and the plane passing through $q_1$ perpendicular to this gradient, which we call $\Sigma_3$. If the point $q_2$ is a distance greater than $\frac{1}{2}\bigl(\beta_1^{2w}\gamma \kappa^N t_1\bigr)$ from $\Sigma_3$, then we are done. Otherwise, consider a ball $B(q_2, \frac{1}{2}\beta_1^{w}\gamma \kappa^N t_1)$ of radius $\beta_1^{w}\gamma \kappa^N t_1$ centered on $q_2$. By repeating the argument of \cref{lemma6} now with respect to the plane $\Sigma_3$ restricted , we conclude that $q_2$ is a distance at least
\[
\frac{1}{2}\theta_0 \gamma \kappa^N t_1 =\frac{1}{2}\bigl(\beta_1^{2w}\gamma \kappa^N t_1\bigr)
\]
from $\Sigma_3$, inside the cube $C(q_1,\gamma\kappa^N t_1)$.

One sees that regardless of the orientation of the hyperplanes $\Sigma_1,\Sigma_2$ through $q_1$, we have now found a point $q_2 \in B(q_1,\gamma \kappa^N t_1)$ which is some distance
$\theta_1 \gamma \kappa^N t_1$ from $\Sigma_1,\Sigma_2$, with $1 > \theta_1 > \theta_0=\beta_{1}^{2w}$.

Now, the difference in the potential values between $q_1$ and $q_2$
due to the harmonic measure in $C(q_1,t_1)\setminus C(q_1,\kappa^N t_1)$ is given by using the Taylor theorem up to the second order; as in \cref{taylor}.

Note that due to the above considerations, the first term on the left in
\eqref{taylor} is at least
\[
\beta_1^{3w}\,\gamma\,k^{N }t_1\,C_3\,M^{-1}\,N\,\Theta(U).
\]
The second term on the right in \eqref{taylor} is bounded from above by
\begin{align}
M\gamma^2 \kappa^{2N} t_{1}^{2}\,\Theta(U)\frac{1}{\kappa^{N}t_1}
\;\lesssim\;
M\gamma^2 \kappa^{N} t_1\,\Theta(U).
\end{align}

This follows by truncating the sum in \cref{secondderivative} to only take into account the harmonic measure in the domain $C(q_1,t_1)\setminus C(q_1,\kappa^N t_1)$. 

Now, consider the difference in potential between the points $q_1$ and $q_2$ due to the harmonic measure in the domain $\Omega \setminus C(q_1, t_1)$, i.e. the contribution in $I_{\text{far}}$ in the expression in \cref{decomposition}.

We call the magnitude of the gradient at $q_1$, due to the harmonic measure in
$\Omega \setminus C(q_1, k^N t_1)$, by $T_N$. Note that the second derivative contribution from $I_{\text{far}}$ is bounded
crudely by the same expression as in \cref{secondderivative}.

We choose $N$ large enough so that,
\begin{align}\label{eq48}
\beta_1^{3w}\,\gamma\,k^{N }t_1\,C_3\,M^{-1}\,N\,\Theta(U)
\;\gg\;
M\gamma^2 k^{N }t_1\,\Theta(U).
\end{align}

Thus, we get
\[
\beta_1^{3w} C_3 N \;\gg\; M^2 \gamma\, C_5.
\]

It is enough to choose $\gamma=\frac{1}{2}$\footnote{Note that in the case of $s<n-1$, we will have to choose $\gamma$ adequately small, as will become clear later. In the present scenario with $s=n-1$, we only need to choose $\gamma=\frac{1}{2}$ so that there is enough separation between points in the ball $B(q_1, \gamma\kappa^N t_1)$ and $\Omega\setminus B(q_1, \kappa^N t_1)$ so that we can employ the estimates for the second derivative contribution coming from the annular region $B(q,\kappa^{N-1}t_1)\setminus B(q_1,\kappa^N t_1)$, using the Ahlfors-David regularity, as in the expression on the left hand side of \cref{secondderivative} .} and a positive integer $N$, so that,
\begin{align}\label{Nlowerbound}
N= \lceil C_7\,M^2\,\beta_1^{-3w}\, \rceil.
\end{align}

Thus we conclude that when $N$ is chosen as in \cref{Nlowerbound}, the second
derivative contribution to the difference in potential between $q_1$ and $q_2$,
due to $I_{\text{far}}, I_{\text{mid}}$, is negligible in comparison to the gradient contribution due to $I_{\text{mid}}$.

For any $\kappa^{N_1}\leq \beta_1^{2w}$, the change in the gradient term of $I_{\text{mid}}$ in changing between two points $a,b\in C(q_1,\kappa^{N t_1})$, with $|a-b|\leq \kappa^{N+N_1}\gamma t_1$, is bounded by the second derivative contribution in $I_{\text{mid}}$, and this is bounded from above by, 
\begin{align}\label{eq54}
    \frac{M\Theta(U)C_5}{\kappa^N t_1}\kappa^{N+N_1}t_1=\Theta(U)C_5\kappa^{N_1},
\end{align}
since $|a-b|\leq \kappa^{N+N_1}t_1$. Also, the gradient term at $q_1$ is bounded from below by, 
\begin{align}\label{eq55}
    \beta_1^{2w}C_3 M^{-1}N\Theta(U).
\end{align}
Consider the dyadic cube of length scale $\kappa^{N_1}$ in which $q_1$ lies, and the center $c(q_1)$ of this dyadic cube. In this case, we have $|q_1-c(q_1)|\leq \kappa^{N+N_1}t_1\gamma$ . Similarly choose the center of the dyadic cube of length scale $\kappa^{N_1}$ in which $q_1$ lies, and the center $c(q_1)$ of this dyadic cube, and again we have, $|q_2-c(q_2)|\leq \kappa^{N+N_1}t_1\gamma$. 

 Using \cref{Nlowerbound}, we readily see that the contribution in \cref{eq55} is much larger than the contribution from \cref{eq54}, and by altering the constant $C_3$ we can keep the same expression for the gradient contribution from $I_{\text{mid}}$, at $c(q_1)$. We consider the planes $\Sigma_2,\Sigma_3$ to pass through the point $c(q_1)$. Without loss of generality, we can thus consider the points $c(q_1), c(q_2)$ to replace the points $q_1,q_2$ in the subsequent argument.

Apart from the dyadic $\mathcal{C}(q_1, \kappa^{N+N_1} t_1)$, we do a standard dyadic decomposition, to get that this $I_{\text{near}}$ contribution to the potential at $q_1$ is bounded from above by $M\thee C\kappa^{N}t_1.$
What remains, is to estimate the $I_{\text{near}}$ contribution to the potential at $q_1$ from the dyadic cube $\mathcal{C}(q_1, \kappa^{N+N_1} t_1)$.

Here we use the notation $\mathcal{C}(q_1,\kappa^{N+N_1}t_1)$ to mean the unique dyadic cube of length scale $\kappa^{N+N_1}t_1$, centered at $q_1$.

\begin{lemma}\label{lemma10}
There exists a point $q'_1\in C(q_1,\frac{1}{2}\kappa^{N+N_1}t_1)$, by a double integral argument, by changing $q_1$ to a point $q'_1 \in C(q_1,\kappa^{N+N_1}t_1)$, we get that the total contribution to the potential at $q'_1$ , due to
$C(q_1, \kappa^{N} t_1)$, is bounded from above by $C_8 M \kappa^{N} t_1 \, \Theta(U)$.
\end{lemma}

\begin{proof} Note that if for every
\begin{align}
q \in C\!\left(q_1, \tfrac12 \kappa^{N+N_1} t_1\right),
\end{align}
we have the contribution,
\begin{align}\label{local integral}
\int_{C(q_1, \kappa^{N+N_1} t_1)} \frac{d\omega(x)}{|x-q|^{n-2}}
\;\ge\;
C_9 M \kappa^{N} t_1 \, \Theta(U).
\end{align}

Then integrating in $q$, we get
\begin{align}\label{doubleintegral}
\int_{C\!\left(q_1, \tfrac12 \kappa^{N+N_1} t_1\right)}
\Bigg(\int_{C(q_1, \kappa^{N+N_1} t_1)}
\frac{\kappa(x)\,dx}{|x-q|^{n-2}}\Bigg)dq
\;\ge\;
C_9 M \Theta(U)\, \kappa^{N} t_1\,\cdot  \kappa^{N+N_1} t_1.
\end{align}

Here $d\omega(x)=\kappa(x)\,dx$, and $\kappa(x)$ is the Poisson kernel within this ball.
Interchanging the order of integration gives that the above integral
is bounded from above by
\begin{align}\label{eq53}
\int_{C(q_1, \kappa^{N+N_1} t_1)}
\kappa(x)\left(
\int_{C\!\left(q_1, \tfrac12 \kappa^{N+N_1} t_1\right)}
\frac{dq}{|x-q|^{n-2}}
\right)dx.
\end{align}

Using the $(n-1)$ Ahlfors regularity and dyadic scaling of the integrand, the inner integral is bounded from above by
$C_{10}\, \kappa^{N+N_1} t_i$.

Also, by assumption, we have
\[
\Theta\left(C(q_1, \kappa^{N+N_1} t_1)\right)
=
\int_{C(q_1, \kappa^{N+N_1} t_1)} \kappa(x)\,dx
\;\le\;
M \left(\kappa^{N+N_1} t_1\right)^{n-1}.
\]

So we get an upper bound to \cref{eq53}, given by,
\[
M\, C_0\, \Theta(U)\, (\kappa^{N+N_1})^{\,n}.
\]
Whereas the lower bound to the left hand side of \cref{doubleintegral} is given by
\[
M\, \Theta(U)\, C_9\, \kappa^{N}\, (\kappa^{N+N_1})^{\,n-1}.
\]
Thus choosing \(N_1 \) large enough, uniformly dependent on \(M, , c_0, n\), we get a contradiction.
\end{proof}
Using \cref{mult expansion,local integral}, we see that by going from $q_1$ to a point $q'_1 \in C(q_1, \frac{1}{2}\kappa^{N+N_1}t_1)$ with the property that,
\begin{align}
    \int_{C(q_1, \kappa^{N+N_1} t_1)} \frac{d\omega(x)}{|x-q'_1|^{n-2}}
<
C_9 M \kappa^{N} t_1 \, \Theta(U),
\end{align}
the gradient contribution due to $I_{\text{mid}}$ at $q'_1$ changes by a negligible amount in comparison to the gradient contribution due to $I_{mid}$ at $q_1$, by the argument preceding the proof of \cref{lemma10}. Similarly, we choose a point $q'_2$ in the cube $C(q_2,\kappa^{N+N_1}t_1)$, so that,
\begin{align}
    \int_{C(q_2, \kappa^{N+N_1} t_1)} \frac{d\omega(x)}{|x-q'_2|^{n-2}}
<
C_9 M \kappa^{N} t_1 \, \Theta(U),
\end{align}

Note that the contribution to the difference in the potential between $q'_1,q'_2$ from $I_{\text{near}}$ is bounded from above crudely by $G_{1,\text{near}}+G_{2,\text{near}}$ where $G_{1,\text{near}}$(respectively $(G_{2,\text{near}}))$ is the net contribution to the potential due to $C(q_1,\kappa^{N}t_1)$ at $q_1$(respectively $q_2$.)

Using this, we have that the contribution to the difference in the potential between the points $q'_1, q'_2$, due to $I_{\text{near}}$, as well as the contribution due to the second derivative contribution from $\Omega\setminus B(q_1,\kappa^N t_1)$ are made negligible in comparison to the gradient contribution from $I_{\text{mid}}$, and thus the gradient contribution from $I_{\text{far}}$ has to balance this gradient contribution from $I_{\text{mid}}$.\footnote{Note that for $I_{\text{far}}$, we have no control over how the cancellations in the gradient contribution happens. } Thus, we are forced to have,
\begin{align}\label{eq60}
M N \Theta(U)\, C_2 \gamma \kappa^{N}t_1
\;\gtrsim\;
T_N \theta_1 \, \gamma\kappa^N t_1
\;\gtrsim\;
c\, \beta_1^{2w}\, \gamma\kappa^N t_1\, M^{-1} N \Theta(U) .
\end{align}

So,
\begin{align}\label{eq36}
T_N \;\le\; \frac{M N \Theta(U) C_2}{\theta_1}
\;\le\;
\left(\frac{1}{\beta_1}\right)^{2w} M N \Theta(U) C_2 .
\end{align}
Recall that we have chosen $\theta_1>\theta_0$, which was defined in \cref{eq45}.

Now we consider the number of iterations till we reach the cube
\(C(q_1, k^{N+N_2} t_1)\).
Here we have chosen \(N_2\) many further steps so that we have that the lower bound on the gradient term from $I_{\text{mid}}$ when considering the cube $C(q_1, \kappa^{N+N_2}t_1)$, is greater than the right hand side of \cref{eq36}.
\begin{align}\label{eq37}
c\, \beta_1^{3w}\, M^{-1}  \Theta(U) \,(N+N_2)
\;>\;
\left(\frac{1}{\beta_1}\right)^{2w} M N \Theta(U) C_2 .
\end{align}

So,
\[
(N+N_2) >\; C \left(\frac{1}{\beta_1}\right)^{5w} M^2 N .
\]

So,
\[
N_2  >\; C \left(\frac{1}{\beta_1}\right)^{5w}\cdot M^2 N - N
\;\simeq\;
C \left(\frac{1}{\beta_1}\right)^{5w} M^2 N .
\]

Thus, repeating the entire calculation within the cube
\(C(q_1, k^{N+N_2} t_1)\),
we get a contradiction since
\begin{align}\label{eq633}
T_{N+N_2}
\theta
\gamma\kappa^{N+N_2} t_1=T_{N}
\theta
\gamma\kappa^{N+N_2} t_1
>\;
c\, \beta_1^{3w}\, \gamma\kappa^{N+N_2} t_1 \, M^{-1} (N+N_2) \thee.
\end{align}

So, this means that, 
\begin{align}
   T_N\geq \frac{1}{\theta}  c\beta_1^{3w}M^{-1}\Theta(U) (N+N_2)\geq  c\beta_1^{3w}M^{-1}\Theta(U) (N+N_2), 
\end{align}

which combined with \cref{eq36,eq37}, gives us a contradiction.

Here again, within the Euclidean cube $C(q_1,\kappa^{N+N_2}t_1)$ we need to repeat the argument of \cref{lemma10} so that we consider all the dyadic cubes of length scale $\kappa^{N+N_2 +N_1}t_1$, to ensure that the potential contribution $I_{\text{1,near}}$ within the cube $C(q_1, \kappa^{N+N_2}t_1)$ as well as the second order contribution in \cref{taylor} from $I_{\text{mid}}, I_{\text{far}}$, are negligible in comparison to the gradient contribution from $I_{1,\text{mid}} $ which we now define to be the contribution from the region $C(q_1,t_1)\setminus C(q_1, \kappa^{N+N_2}t_1 )$.

Thus, the argument gives that there exists some dyadic cube contained in $Q$, of length scale at least $2^{-j_0}l(Q)$ for some $j_0 \geq 1$ where $l(U)$ is the length scale of $U$ so that its density is either greater than $M\Theta(U)$,\footnote{Recall that we have considered $q_1$ as the corkscrew point relative to the ball $\tfrac13 B_U$.} or smaller than $M^{-1}\Theta(U)$. In the former case, for $M$ sufficiently large, we also get that the density is greater than $MC\Theta(Q)$ and the argument is finished, leading to the use of \cref{lemma12}. In the latter case, we need the following lemma.

\begin{lemma}\label{lemma11} We can find some $M_1>1$ depending on $M$ and the ambient parameters
so that, if there is a cube $A_1\subset U$ with
\[
\frac{\omega(A_1)}{\sigma(A_1)} < M^{-1}\Theta(U),
\]
then there must exist a cube of the same generation, $A_2\subset U$, so that
\[
\frac{\omega(A_2)}{\sigma(A_2)} > M_1\Theta(U).
\]

Moreover, it suffices to take 
\begin{align*}
    M_1 =1+\frac{1}{2}\Big( \frac{1- M^{-1}\eta_2}{1-\eta_1} -1 \Big).
\end{align*}
\end{lemma}
\begin{proof}Consider that there is some $M_1 > 1$ so that,
\[
\frac{\omega(A_1)}{\sigma(A_1)} < M^{-1}\thee, 
\qquad
\frac{\omega(A_j)}{\sigma(A_j)} < M_1\thee,
\]
for all cubes $A_j$ belonging to the disjoint decomposition of $U$ at this dyadic scale, i.e $U= A_1\sqcup_{j\geq 2} A_j$.

Then, summing the contribution to the harmonic measure, we get
\begin{align*}
\omega(U)
&= \omega(A_1) + \omega(A_1^c) \\
&< M^{-1}\frac{\sigma(A_1)}{\sigma(U)} \, \omega(U)
  + M_1 \frac{\sigma(A_1^c)}{\sigma(U)} \, \omega(U) \\
&= \left(
    M^{-1}\frac{\sigma(A_1)}{\sigma(U)}
    + M_1\frac{\sigma(A_1^c)}{\sigma(U)}
  \right)\omega(U).
\end{align*}

Thus, if we have
\[
M^{-1}\frac{\sigma(A_1)}{\sigma(U)}
+ M_1\frac{\sigma(A_1^c)}{\sigma(U)}
< 1,
\]
we get a contradiction.

Now we have some parameters $\eta_1$ and $\eta_2$ dependent on $j_0$ and the parameters from \cref{cubelemma}, so that
\[
\eta_1 < \frac{\sigma(A_1)}{\sigma(U)} < \eta_2.
\]

So if we have
\[
M^{-1}\eta_2 + M_1(1-\eta_1) < 1,
\]
then we have a contradiction since
\[
M^{-1}\frac{\sigma(A_1)}{\sigma(U)} + M_1\frac{\sigma(A_1^c)}{\sigma(U)}
< M^{-1}\eta_2 + M_1(1-\eta_1).
\]

So to enforce the contradiction, we can work with any
\[
M_1 < \frac{1}{1-\eta_1}\bigl(1 - M^{-1}\eta_2\bigr).
\]

So if we require,
\begin{align}
\frac{1}{1-\eta_1}\bigl(1 - M^{-1}\eta_2\bigr) > 1. \\ \Leftrightarrow 1 - M^{-1}\eta_2 > 1 - \eta_1\\  \Leftrightarrow \eta_1 > M^{-1}\eta_2 \\ \Leftrightarrow  M > \frac{\eta_2}{\eta_1} > 1.
\end{align}
then we can choose some fixed $1<M_1<\frac{1}{1-\eta_1}\bigl(1 - M^{-1}\eta_2\bigr) $ and we are done.

In particular, it is enough to work with,
\begin{align}
    M_1 =1+\frac{1}{2}\Big( \frac{1- M^{-1}\eta_2}{1-\eta_1} -1 \Big).
\end{align}

Thus it is enough to start out by requiring our $M$ parameter to be bigger than $\eta_2/\eta_1$.
\end{proof}

Lastly, we also require $\kappa^{N+N_1+N_2}t_1$ small enough so that, for  each annular region $C(q_1,\kappa^m t_1)\setminus C(q_1,\kappa^{m+1}t_1)$, the ball $B(x_m, c\beta_{1}^{w}\kappa^{m+1}t_1)$ obtained from \cref{lemma6} contains an ample subset that consists of dyadic cubes contained entirely within $C(q_1,\kappa^m t_1)\setminus C(q_1,\kappa^{m+1}t_1)$. This ensures that we do not have any double counting boundary layers between adjacent annular regions. Thus it is enough to require for each $1\leq m\leq N$
\begin{align}\label{eq70}
    \kappa^{N+N_1+N_2}t_1\leq c\beta_{1}^{w}\kappa^{N+1}t_1(\leq c\beta_{1}^{w}\kappa^{m+1}t_1).
\end{align}

It is enough to take \footnote{This condition is also natural when we consider the fact the point $q_2$ is a distance at least $\beta_{1}^{2w}\kappa^{N}t_1$ from the plane $\Sigma_3$, and so in \cref{lemma10}, the length scale $\kappa^{N_1}$ is at most a value of $c\beta_{1}^{2w}$.} 
\begin{align}
    \kappa^{N_1}\leq c\beta_{1}^{2w}.
\end{align}

If we found a ball where the density is gained by at least a factor of $M\thee$, for $M$ large enough, then we are done, as discussed prior to the proof of .

On the other hand, if we only find a dyadic subcube $D\subset U$ where the density drops by a factor at most $M^{-1}$, we now repeat the previous lemma to find another dyadic cube $D_1$ where the density increment occurs by a factor of at least $M_1>1$ constructed above, which means $\Theta D_1)>M_{1}\Theta(U)$.

We then consider all the dyadic subcubes in $D_1$ and the corkscrew point relative to the ball $\tfrac13 B_{D_1}$, and repeat the argument of \cref{lemma9}. Note that in the process, considering the second order contribution from $I_{mid},I_{1,mid}, I_{far}$, we might have cubes whose density increases by a factor of at least $M\Theta(D_1)>MM_1\Theta(U)$, but again with a basic covering argument, and noting that $D_1\subset U$ and $\omega^x (U)\geq c_0$ with $x_0\in \tfrac13 B_U \setminus \partial\Omega$ and a crude bound of $1$ for the harmonic measure of any Euclidean cube, we get that such cubes necessarily lie within  $Q(\supset U)$.

In the process, we find either a dyadic cube of length scale at least $2^{-2j_0}l(Q)$ and whose density increases by a factor of $M\Theta(D_1)$ in which case we are done, or we find by use of \cref{lemma11} restricted to $D_1$, a dyadic subcube $D_2$ where the density increment is at least $M_1\Theta(D_1)\geq M_{1}^{2}\Theta(U)$. We next repeat the argument within the Euclidean cube $D_2$, and continue the process, and upon running this argument at most $O(k)$ many times where $M_{1}^{k}=M$, we find a subcube $Q_1\subset Q$  where the average density has increased by a factor of $M$ over the average density in $U$, and whose length scale is at least $2^{-Ckj_0}l(Q)=2^{-Cj_0\log\frac{M}{M_1}}l(Q)$, and we conclude the proof of \cref{lemma9} by replacing $\Theta(U)$ with $\Theta(Q)$, up to a multiplicative factor. This concludes the proof of \cref{lemma9} for the case of $s=n-1$.

\bigskip
\item
      \textbf{The case $n-1-\delta_0 \leq s<n-1$.}

      The structure of the proof in this case is similar to the previous case of $s=n-1$. The new ingredient in this paper, is to give quantitative estimates on $\delta_0$ in terms of the parameters $\beta$ and the Ahlfors regularity parameters. For convenience, let us write $s=n-1-\delta$. 

      Without loss of generality, consider again the points  $p_1, q_1$, and the cubes $C(q_1,\kappa^m t_1)$ considered in the previous section. 
      

Starting with the cube $C(q_1,\kappa^N t_1)$, we consider the magnitude of the
gradient at $q_1$ along $\hat{\nu}$, due to the first $N$ annular regions that constitute $C(q_1,t_1)\setminus C(q_!,\kappa^{N}t_1)$. This contributes to $I_{\text{mid}}$
This is bounded crudely from above by
\begin{align}
|\sum_{m=0}^{N-1} \nabla A_m|
\le \sum_{m=0}^{N-1} \frac{MC(\kappa^{N-m}t_1)^{s}}{(\kappa^{N-m} t_1)^{n-1}}
\le \sum_{m=0}^{N-1} \frac{MC}{(\kappa^{N} t_1)^{\delta}} \kappa^{m\delta}
\le \frac{MC}{(\kappa^{N} t_1)^{\delta}} \sum_{m=0}^{\infty} \kappa^{m\delta}
= \frac{MC_1}{(\kappa^{N} t_1)^{\delta}} .
\end{align}

Now we find a lower bound on the gradient at $q_1$,
projected along $\hat{\nu}$.
From \eqref{eq20}, we first get the expression
\begin{align*}
C_2\left(\frac{1}{\kappa^{n-1}}-1\right)\left( \frac{1}{\beta^{n-1}} - 1 \right)^{w}
\left( \beta^{w} \kappa^{m} t_1 \right)^{s}
&= C_2 (1-\beta^{n-1})^{w}
\beta_1^{w (s-n+1)} (\kappa^{m} t_1)^{s} 
\end{align*}

Note from \cref{kappa} that $\kappa$ is written as a function of $s$ and $C_1$, the Ahlfors-David parameter, and that we can bound $n-2<s\leq n-1$. We absorb the factor of $(1/\kappa^{n-1} -1)$ into the constant term. So, we get a contradiction to the Ahlfors regularity, as before, when we have
\begin{align}
C_3 (1-\beta_1^{n-1})^{w}
\frac{1}{\beta_1^{w\delta}} (\kappa^{m} t_1)^{s}
\le \frac{1}{2} C_1 (\kappa^{m} t_1)^{s}.
\end{align}

So,
\begin{align}
(1-\beta_1^{n-1})^{w}
< \beta_1^{\delta} \left( \frac{1}{2} C_4^{-2} \right)^{\frac{1}{w}}.
\end{align}

So, this would force a ball at a distance at least
$\beta_1^{w} (\kappa^{m} t_1)$ away from $\Sigma_1$,
contained in $H^{-}_1$.
Thus we have a lower bound for the projection of the gradient due to the ball $B(q_1,t_1)\setminus B(q_1,\kappa^N t_1)$ onto the vector $\hat{\nu}$,
\begin{align}\label{eq70}
\gtrsim
\frac{M^{-1}\thee(\beta_1^{w} \kappa^{m} t_1)^{n-1-\delta}}{(\kappa^{m} t_1)^{n-1}}
\beta_1^{\omega}
= \frac{M^{-1}\thee\beta_1^{w(n-1-\delta)+\omega}}{(\kappa^{m} t_1)^{\delta}}
= \frac{M^{-1}\thee\beta_1^{w(n-\delta)}}{(\kappa^{m} t_1)^{\delta}} .
\end{align}

Calculating the lower bound to the gradient at \(q_1\),
due to \( C(q_1,t_1) \setminus C(q_1,\kappa^N t_1) \), gives us, upon summing
over the \(N\) many scales,
\begin{align}
|\sum_{m=0}^{N-1} \nabla A_m|
&\ge
\frac{M^{-1}\thee\beta_1^{w (n-\delta)}}{(\kappa^N t_1)^{\delta}}
\left( 1 + \kappa^{\delta} + \kappa^{2\delta} + \cdots + \kappa^{N\delta} \right)
\\
&=
\frac{M^{-1}\thee\beta_1^{w (n-\delta)}}{(\kappa^N t_1)^{\delta}}
\frac{1-\kappa^{N\delta}}{1-\kappa^{\delta}}
\\
&=
\frac{M^{-1}\thee\beta_1^{w (n-\delta)}}{(\kappa^N t_1)^{\delta}}
\,
\frac{\kappa^{(N-1)\delta}}{\left( \frac{1}{\kappa} \right)^{\delta}-1}
\left( \left(\frac{1}{\kappa}\right)^{N\delta} - 1 \right)
\\
&\ge
\frac{M^{-1}\thee\beta_1^{w (n-\delta)}}{(\kappa^N t_1)^{\delta}}
\,
\frac{1}{\left( \frac{1}{\kappa} \right)^{\delta}-1}
\left( \frac{1}{\kappa} \right)^{\delta}.
\end{align}

For \(\delta\) sufficiently small, using the mean value theorem, we get that,
\begin{align}\label{mvt}
\Bigg(\Big( \frac{1}{\kappa} \Big)^{\delta}-1\Bigg)
&= \delta \Big(\frac{1}{\kappa}\Big)^{\delta_2} \log \frac{1}{\kappa},
\end{align}
for some $0\leq \delta_2\leq \delta$.

Hence
\begin{align}
|\sum_{m=0}^{N-1} \nabla A_m|
\gtrsim
\frac{M^{-1}\thee\beta_1^{w (n-\delta)}}{(\kappa^N t_1)^{\delta}}
\left( \frac{1}{\kappa} \right)^{\delta-\delta_2}
\frac{1}{\delta \log \frac{1}{\kappa}}
=
\frac{M^{-1}\thee\beta_1^{w (n-\delta)}}{(\kappa^N t_1)^{\delta}\,
\kappa^{\delta-\delta_2}\,\delta \log \frac{1}{\kappa}} .
\end{align}
Using the analogous version of \cref{separate,eq43}, in this case, we easily see that the contribution from $H^{+}_1\cap \partial\Omega \cap (C(q_1,t_1)\setminus C(q_1,\kappa^N t_1))$ carries a factor of $t_1/r_1$ in place of the factor of $\beta_{1}^w$ in \cref{eq70} and is thus negligible in comparison to the contribution from \cref{eq70}.

Further, when restricted to \(C(q_1,\kappa^N t_1)\), and the contribution to the potential from $I_{\text{near}}$, we get that the contribution from all
the cubes apart from \(C(q_1,\kappa^{N+N_1} t_1)\), is bounded from above by,
\begin{align}
C (\kappa^N t_1)^{1-\delta}
\left( 1 + \kappa^{1-\delta} + \kappa^{2(1-\delta)} + \cdots \right)
&=
C (\kappa^N t_1)^{1-\delta}
\frac{1}{1-\kappa^{1-\delta}} .
\end{align}

Further, by an immediate application of the argument of  \cref{lemma10} in this case, by going down to a length scale $\kappa^{N+N_1} t_1$ we get that we can choose a point \(q'_1\) in the cube \(C(q_1,\kappa^{N+N_1} t_1)\) so that the total contribution
to the potential at $q_1$ is given by
\begin{align}\label{eq78}
\le 2C (\kappa^N t_1)^{1-\delta}
\left( \frac{1}{1-\kappa^{1-\delta}} \right).
\end{align}

We introduced the \(\theta\) parameter in the previous section, so that \(\beta_1^{2w}\le \theta \le 1\). Considering the cube \(C(q_1,\kappa^N t_1)\) and choosing the point
\(q_2\) exactly as in the previous section within the cube $C(q_1,\gamma\kappa^{N}t_1$), so that we get that the difference in the potential between the points
\(q_1\) and \(q_2\), due to the gradient from the harmonic measure contained in \(C(q_1,t_1)\setminus C(q_1,\kappa^N t_1)\),
is lower bounded by
\begin{align}\label{eq79}
\geq \frac{M^{-1}\thee\beta_1^{w (n-\delta)}}{(\kappa^N t_1)^{\delta}\,
\kappa^{\delta-\delta_2}\,\delta \log \frac{1}{\kappa}} \theta\gamma\kappa^N t_1
&\geq
\frac{M^{-1}\gamma\thee\beta_1^{w(n+2-\delta)}}{\kappa^{\delta-\delta_2 }\log \frac{1}{\kappa}}
(\kappa^N t_1)^{1-\delta}\frac{1}{\delta}.
\end{align}

Disregarding for the moment the second order contribution due to the harmonic measure in
\(\Omega\setminus C(q_1,\kappa^N t_1)\) for the moment, to the potential difference between
\(q_1\) and \(q_2\), we get by comparing \cref{eq78,eq79} that when
\begin{align}
\frac{\beta_1^{w(n+2-\delta)}}{\kappa^\delta (\log \tfrac{1}{\kappa}) \delta}
\gg
C\left(\frac{1}{1-\kappa^{1-\delta}}\right),
\end{align}
with the implied constants depending on $n,C_1$, then the contribution due to the harmonic measure in \(C(q_1,\kappa^N t_1)\) to this potential difference
is negligible.

Note that \(\kappa\) is dependent on \(C_1\), and so it is enough to require that
\begin{align}
\frac{\beta_1^{w(n+2-\delta)}}{\delta}\geq \frac{\beta_1^{w(n+2-\delta)}}{\delta_0}  \gg 1,
\end{align}
where the implied constant depends on $n,C_1$.

Note that previously, we also required,
\begin{align}\label{eq82}
(1-\beta_1^{n-1}) < \beta_1^\delta \left(\frac{1}{2}C_{1}^{-2}\right)^{1/w}.
\end{align}

thus since $\delta_0\geq \delta$, it is enough to require that 
\begin{align}
    (1-\beta_1^{n-1}) < \beta_1^{\delta_0} \left(\frac{1}{2}C_{1}^{-2}\right)^{1/w}.
\end{align}

We claim that it is enough to take
\begin{align}
w &= \frac{\log C}{\beta_1^n}, \ \delta_0 = \beta_1^{3nw}.
\end{align}

To see this, note that upon taking the logarithm on both sides of \eqref{eq82}, 
using these values of $w$ and $\delta_0$, we obtain
\begin{align}
\frac{\log C}{\beta_1^n} \log(1-\beta_1^n)
&< \delta w \log \beta_1 - \log 2 - 2\log C .
\end{align}

Equivalently,
\begin{align}
-\frac{\log C}{\beta_1^n} \log(1-\beta_1^n)
&> w \beta_1^{3nw} \log \frac{1}{\beta_1} + \log 2 + 2\log C .
\end{align}

For all $\beta_1<1$, since we have
\begin{align}
-\log(1-\beta_1^{n-1}) &\ge \beta_1^{n-1},
\end{align}
and also
\begin{align}
w\beta_1^{3nw}
&= \frac{\log C}{\beta_1^n}\,\beta_1^{3n\log C/\beta_1^n} \notag\\
&= (\log C)\,\beta_1^{\frac{4n\log C}{\beta_1^n}-n}
\ll 1,
\end{align}
we obtain that the above inequality holds.

Furthermore, we have
\begin{align}
\delta_0
&= \beta_1^{3nw} \notag\\
&= \beta_1^{\frac{3n\log C}{\beta_1^n}}
\ll \beta_1^{\frac{(n+2-\delta_1)\log C}{\beta_1^n}},
\end{align}
since $\beta_1<1$ and $3n > (n+2-\delta_0)$ when $n\geq 3$.

Thus, the dimension drop argument works when
\begin{align}\label{impparameter}
\delta_0 \approx \beta_1^{\frac{3n\log C}{\beta_1^n}}.
\end{align}

It remains to show that the contribution due to the second order term
in the Taylor expansion, from $\Omega\setminus C(q_1, t_1)$, and from $C(q_1,t_1)\setminus C(q_1,\kappa^{N}t_1)$ 
is negligible in comparison to the gradient contribution from $I_{\text{mid}}$.
We see that either of these contributions is bounded from above by
\begin{align}
\sum_{j=0}^\infty \frac{M C_4 \thee}{(2^j k^N t_1)^{1+\delta}}
&\le
\frac{M C_4 \thee}{(k^N t_1)^{1+\delta}}.
\end{align}

Thus the contribution of the second order term is bounded from above by
\begin{align}
\gamma^2 (k^N t_1)^2 \cdot \frac{M C_4 \thee}{(k^N t_1)^{1+\delta}}
&= \gamma^2 M C_4 \thee (k^N t_1)^{1-\delta}.
\end{align}

We recall that the gradient contribution from $I_{mid}$  is bounded from below by,
\begin{align}
\frac{M^{-1}\,\thee\,\beta_1^{w(n+2-\delta)}}{\kappa^{\delta-\delta_2}\,\log\!\left(\tfrac{1}{\kappa}\right)\,\delta}
\,( \kappa^{N} t_1 )^{1-\delta_1}.
\end{align}

Thus, if
\begin{align}\label{eq955}
\frac{M^{-1}\gamma \,\thee\,\beta_1^{\omega(n+2-\delta)}}{\kappa^{\delta}\,\log\!\left(\tfrac{1}{\kappa}\right)\,\delta}
\gg \gamma^{2} M C_4 \thee
\end{align}
then we are done.

Thus to satisfy \cref{eq955}, it is enough to take,
\begin{align}
\frac{M^{-1}}{\kappa^{\delta}\,\log\!\left(\tfrac{1}{\kappa}\right)}
\gg \gamma M C_4,
\end{align}
since we have already also considered earlier the condition,
\begin{align}
\frac{\beta_1^{w(n+2-\delta)}}{\delta} \gg 1.
\end{align}

Thus, it is enough to consider a value of $\gamma$ small enough
so that
\begin{align}\label{eq97}
\gamma \ll \frac{M^{-2}}{C_4\,\kappa^{\delta}\,\log\!\left(\tfrac{1}{\kappa}\right)}.
\end{align}

Recall that $\kappa$ is explicitly taken as
\begin{align}
\kappa = C_1^{-2/(n-1-\delta)}.
\end{align}

Note that since the second order contribution from $I_{\text{mid}}$ is negligible in comparison to the gradient contribution from $I_{mid}$, this enables us, as in the previous section, to a-priori consider the points $q_1,q_2$ to belong to the centers of respective dyadic cubes, and so the argument analogous to \cref{lemma10} also goes through exactly, for this case.

Next, for the upper bound on the gradient contribution from $I_{\text{mid}}$ to the difference in the potential
between $q_1$ and $q_2$, we have,
\begin{align}
&\leq
\frac{C\,\Theta(U)\,M\,\gamma\,\kappa^{N} t_1}{(\kappa^{N} t_1)^{\delta}}
\left(1+\kappa^{\delta}+\kappa^{2\delta}+\cdots\right) \\
&= C_1\,\Theta(U)\,M\,\gamma\,(\kappa^{N} t_1)^{1-\delta}
\frac{1}{1-\kappa^{\delta}} \\
&= C_1\,\Theta(U)\,M\,\gamma\,(\kappa^{N} t_1)^{1-\delta}
\frac{1}{\kappa^{\delta}}
\frac{1}{(\tfrac{1}{\kappa})^{\delta}-1} \\
&\leq
C\,\Theta(U)\,M\,\gamma\,(\kappa^{N} t_1)^{1-\delta}
\frac{1}{\kappa^{\delta-\delta_2}}
\frac{1}{\delta\,\log(\tfrac{1}{\kappa})},
\end{align}
for $\delta$ small enough, and the $\delta_2$ parameter coming from \cref{mvt} as before.

Thus, similar to \cref{eq60} for the case of $s=n-1$, here we have, in the cube \( C(q, \kappa^{N} t_1) \),
\begin{align}\label{eq1055}
 \frac{M^{-1}\gamma\thee\beta_1^{w(n+2-\delta)}}{\kappa^{\delta-\delta_2 }\log \frac{1}{\kappa}}
(\kappa^N t_1)^{1-\delta}\frac{1}{\delta}\leq T_N \theta \gamma \kappa^{N}t_1  \leq C\,\Theta(U)\,M\,\gamma\,(\kappa^{N} t_1)^{1-\delta}
\frac{1}{\kappa^{\delta-\delta_2}}
\frac{1}{\delta\,\log(\tfrac{1}{\kappa})}
\end{align}
Recall that we have, $\beta_1^{2w}\le \theta \le 1$.

In particular, we have from here, that,
\begin{align}
    T_N\leq\frac{1}{\theta}C\,\Theta(U)\,M\,\gamma\,(\kappa^{N} t_1)^{-\delta}
\frac{1}{\kappa^{\delta-\delta_2}}
\frac{1}{\delta\,\log(\tfrac{1}{\kappa})} \leq  \Big(\frac{1}{\beta_{1}^{2w}}\Big)C\,\Theta(U)\,M\,\gamma\,(\kappa^{N} t_1)^{-\delta}
\frac{1}{\kappa^{\delta-\delta_2}}
\frac{1}{\delta\,\log(\tfrac{1}{\kappa})}
\end{align}


Thus, if we further go down to a radial length \( \kappa^{N+N_2} t_1 \), obtain the same bounds on the second derivative contributions from $I_{\text{mid}}, I_{\text{far}}$, the potential contribution from $I_{\text{near}}$, by going down to a length scale of $\kappa^{N+N_2 +N_1}$, so that only the gradient contributions from $I_{\text{mid}}$ and $I_{\text{far}}$ dominate these previous contributions, and we require that
\begin{align}\label{eq104}
\frac{\beta_1^{\,w(n+2-\delta)} M^{-1}\thee \gamma}
{\, \delta\kappa^{\delta-\delta_2} \log \frac{1}{\kappa}\,}
\left( \kappa^{N+N_2} t_1 \right)^{-\delta}
\;\geq
M\cdot \frac{1}{\beta_{1}^{2w}} \frac{M\thee\gamma }{\delta \kappa^{\delta-\delta_2} \log \frac{1}{\kappa}\, }
\left( \kappa^{N} t_1 \right)^{-\delta},
\end{align}
then making the same argument within the cube
\( C(q_1, \kappa^{N+N_2} t_1) \),
we obtain a contradiction.\footnote{Note that $\gamma$ is chosen to satisfy \cref{eq97}}

This is because within the cube $C(q_1,\kappa^{N+N_2}t_1)$, we must have analogous to \cref{eq633}, and using \cref{eq1055}, that, 
\begin{multline}
     \frac{M^{-1}\gamma\thee\beta_1^{w(n+2-\delta)}}{\kappa^{\delta-\delta_2 }\log \frac{1}{\kappa}}
(\kappa^{N+N_2} t_1)^{1-\delta}\frac{1}{\delta}\leq T_{N+N_2} \theta \gamma \kappa^{N+N_2}t_1  \\ \Rightarrow T_{N+N_2}=T_N\geq \frac{M^{-1}\gamma\thee\beta_1^{w(n+2-\delta)}}{\delta\kappa^{\delta-\delta_2 }\log \frac{1}{\kappa}}
(\kappa^{N+N_2} t_1)^{-\delta}
\end{multline}

To satisfy \cref{eq104}, it is enough to require that,
\[
\beta_1^{\,w(n+4-\delta)} M^{-1} \kappa^{-N_2 \delta}
\;\gtrsim\; M^2,
\]
so
\[
\left( \frac{1}{\kappa} \right)^{N_2 \delta}
\;\gtrsim\;
M^3 \left( \frac{1}{\beta_1} \right)^{w(n+4-\delta)}.
\]

Hence,
\[
N_2 \delta \log \frac{1}{\kappa}
\;\gtrsim\;
3\log M + w(n+4-\delta)\log \frac{1}{\beta_1}.
\]

Therefore, when
\begin{align}
N_2
\;\gtrsim\;
\frac{
3\log M
+ \bigl( \frac{\log C}{\beta_1^n} \bigr)(n+4-\delta)\log \frac{1}{\beta_1}
}{
\delta \log \frac{1}{\kappa}
}.
\end{align}

 we have the required contradiction.

Lastly, by an argument identical to the one for the case of (1) with $s=n-1$, we also iterate this argument $\sim \log M/M_1$ many times within the dyadic cube $U$, to conclude \cref{lemma9}.
\end{itemize}
\end{proof}

\subsubsection{Change of poles argument.}

Now we adopt the proof of Lemma 4.3 of \cite{Az20}, with modifications, to complete the argument.

\begin{lemma}\label{lemma12}
    Let $Q$ be a dyadic cube centered on $\partial \Omega$ and let
$p \in \Omega \setminus Q$, and the boundary $\partial\Omega$ is $(s,C_1)$- Ahlfors-David regular, with $n-1-\delta_0\leq s\leq n-1$. Consider the center $c(Q)$ of the dyadic cube $Q$, and as in \cref{lemma9}, consider the dyadic cube $U$ containing $c(Q)$ so that $l(U)=\frac{1}{C'}l(Q)$, $C'>1$, with $C'\approx C$ and $C$ is large enough depending on the parameters of \cref{cubelemma}(3).


Set $\omega = \omega^{p}_{\Omega}$.
Then for every $M_1 > 1$ large enough dependent on $C'$, there exists $\eta > 0$ dependent on $M_1$, and a dyadic cube
$Q_1 \subset \tfrac{1}{2} B_Q$ such that
$l(Q_1) \geq \eta l(Q)$ and
\[
\Theta^s_\omega(Q_1)
\notin
\bigl[
M_1^{-1} \, \Theta^s_\omega(Q),
\,
M_1 \, \Theta^s_\omega(Q)
\bigr].
\]
\end{lemma}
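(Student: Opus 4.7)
The plan is to reduce the statement to \cref{lemma9} via a change-of-pole argument in the spirit of Lemma 4.3 of \cite{Az20}. The idea is to replace the arbitrary pole $p\in\Omega$ by a corkscrew pole located near $Q$ (to which the hypotheses of \cref{lemma9} directly apply), obtain a density increment there, and then transfer this increment back to the measure $\omega=\omega^{p}$ through a Harnack-chain / Markov-chain comparison.

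First I would fix a corkscrew point $x\in \tfrac{1}{3}B_U$ with $\delta_{\Omega}(x)\gtrsim l(U)$, whose existence is guaranteed by the interior corkscrew condition (\cref{def1}); since $l(U)=(C')^{-1}l(Q)$, we also have $\delta_{\Omega}(x)\gtrsim l(Q)$. Applying \cref{lemma9} with this pole $x$ and the cube $U\subset Q$, together with \cref{lemma11} to force the deviation in the upward direction (the downward direction is treated symmetrically and also yields a cube outside the stated interval), produces a dyadic cube $Q_1\subset Q$ with $l(Q_1)\geq \chi\, l(Q)$ and
\begin{align*}
    \Theta^{s}_{\omega^{x}}(Q_1) \geq M\, \Theta^{s}_{\omega^{x}}(Q),
\end{align*}
for any prescribed large $M$. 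By choosing $C'$ sufficiently large relative to $C_1$ and the constants of \cref{cubelemma}(3), one can further arrange $Q_1\subset \tfrac{1}{2}B_Q$, since the cube produced by \cref{lemma9} lies inside a controlled enlargement of $U$.

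The core step is the change-of-pole comparability
\begin{align*}
    \frac{\omega^{p}(Q_1)}{\omega^{x}(Q_1)} \;\approx\; \frac{\omega^{p}(Q)}{\omega^{x}(Q)},
\end{align*}
with constants depending only on $n$, $C_1$, $\beta$, and the corkscrew parameter. This is established exactly as in Lemma 4.3 of \cite{Az20}: one connects $p$ to $x$ through a chain of balls of controlled distance from $\partial\Omega$, and uses \cref{l:bourgain} to bound $\omega^{y}(A)/\omega^{x}(A)$ from below uniformly in $y$ along the chain for every relevant subcube $A\subset Q$, combined with the H\"older estimate \cref{l:holder} to control the contribution from scales much larger than $l(Q)$. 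Multiplying through and dividing by $\sigma(Q_1)/\sigma(Q)$, the first step yields
\begin{align*}
    \Theta^{s}_{\omega^{p}}(Q_1) \;\gtrsim\; M\, \Theta^{s}_{\omega^{p}}(Q),
\end{align*}
so that setting $M_1 := c_0 M$ for a suitable absolute constant $c_0$ and $\eta:=\chi$ gives the conclusion.

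The main obstacle will be verifying that the change-of-pole constants are uniform as $s$ ranges over $[n-1-\delta_0,\,n-1]$; this requires that both the AD-regularity constant $C_1$ and the implicit CDC constant feeding \cref{l:holder} remain controlled in $s$, so that the comparability constants depend only on $n$, $C_1$, and $\beta$, and not on $s$ itself. A secondary technicality is to ensure $Q_1\subset \tfrac{1}{2}B_Q$, which is exactly what drives the requirement in the hypothesis that $C'$ be chosen large depending on the parameters of \cref{cubelemma}(3), and this is the only place where the precise geometry of the dyadic cubes of David--Semmes enters the change-of-pole step.
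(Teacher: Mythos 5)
Your high-level strategy (apply \cref{lemma9} at a pole located near $Q$, then transfer the density increment back to the original pole $p$) matches the paper's, but the mechanism you propose for the transfer step has a genuine gap. You invoke a two-sided change-of-pole comparability
\begin{align*}
    \frac{\omega^{p}(Q_1)}{\omega^{x}(Q_1)} \;\approx\; \frac{\omega^{p}(Q)}{\omega^{x}(Q)},
\end{align*}
and assert it follows by connecting $p$ to $x$ through ``a chain of balls of controlled distance from $\partial\Omega$.'' That is a Harnack-chain argument, which requires a Harnack chain condition on $\Omega$ --- an NTA-type hypothesis that is not among the assumptions here. Under mere Ahlfors--David regularity and the corkscrew condition (which comes for free at these codimensions), a CFMS-type two-sided comparison theorem is simply not available; that is precisely the difficulty that both this paper and Azzam's Lemma 4.3 are designed to circumvent. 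The paper instead sets $B_0=\tfrac15 B_U$, restricts to the subdomain $\Omega\setminus B_0$, and uses the Markov property (strong Markov decomposition of harmonic measure across $\partial B_0\cap\Omega$), which holds with no extra geometric hypotheses, together with \cref{l:holder} to discard the contribution from points of $\partial B_0\cap\Omega$ that are within $\varepsilon$ of $\partial\Omega$. This produces a ball $B'$ on $\partial B_0$ with $\widetilde\omega^{p}(B')\gtrsim_{\varepsilon,M_1}\omega^{p}(\tfrac1{10}B_0)$, and then the one-sided inequality $\omega^{p}(Q_1)\geq\int_{B'\cap B_0}\omega^{x}(Q_1)\,d\widetilde\omega^{p}(x)$ is all that is used; there is no upper bound on $\omega^{p}(Q_1)/\omega^{x}(Q_1)$ and none is needed.

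Two further points you omit. First, the argument requires an initial dichotomy: either $\omega^{p}(\tfrac{1}{10}B_0)\geq M_2^{-1}\omega^{p}(Q)$, or not; in the latter case a direct covering of $\tfrac{1}{10C'}B_0$ by dyadic cubes of comparable size immediately produces a cube with density $\leq M_1^{-1}\Theta^{s}_{\omega}(Q)$, and one is done without any change of pole at all. Your proposal jumps straight to the change-of-pole step and never handles this case. Second, your use of \cref{l:bourgain} is misdirected: Bourgain's estimate gives a lower bound on $\omega^{x}(B(\xi,r))$ for $x$ close to $\xi\in\partial\Omega$, and in the paper it is used to show $\omega^{x}(Q)\gtrsim 1$ for $x\in\partial B_0\cap\Omega$ (hence to bound $\widetilde\omega^{p}(\partial B_0\cap\Omega)$ by $\omega^{p}(Q)$), not to control ratios $\omega^{y}(A)/\omega^{x}(A)$ along a chain, which it cannot do.
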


\begin{proof} Consider the ball $B_0 = \frac{1}{5}B_U$ and the pole $p \in \Omega \setminus aB_0$,
with $a = 10 b^{-1} > b^{-1}$, and $b$ as in the Bourgain lemma.
We assume that
\begin{equation}\label{eq91}
\omega^p\!\left(\tfrac{1}{10} B_0\right) \ge M_2^{-1} \, \omega^p(Q),
\end{equation}
with $M_2$ large enough to be chosen momentarily. Note that the surfaces measures of $\frac{1}{10}B_0$ and $Q$ are comparable depending on $C'$.

Assuming that \cref{eq91} does not hold, we consider the minimal disjoint covering $\mathcal{C}_1$ of $\frac{1}{10C'}B_0$ by dyadic cubes of diameter comparable to $l(\frac{1}{10C'}B_0)$. If the Poisson kernel of each of the dyadic cubes in this cover $\mathcal{C}_1$ is greater than $M^{-1}_1 \Theta^s_\omega(Q)$, we then easily get that,
\begin{align}
    \omega^{p}(\frac{1}{10}B_0)>\omega^{p}(\cup_{S\in \mathcal{C}_1} S) > M^{-1}_1\sigma(\cup_{S\in \mathcal{C}_1} S) \Theta^s_{\omega^{p}}(Q)>M^{-1}_1 C_3\sigma(\frac{1}{10}B_0) \Theta^s_{\omega^{p}}(Q)=M_{1}^{-1}C_4 \omega^{p}(Q).
\end{align}

Thus if we choose $M^{-1}_1 C_4>M^{-1}_2$, we get a contradiction. Thus, at least one of the dyadic cubes in $\mathcal{C}_1$ must have Poisson kernel less than or equal to $M^{-1}_1 \Theta^s_\omega(Q)$ and we would be done. 

Thus assume that \cref{eq91} holds, and consider,
\[
E = \{ x \in \partial B_0 \cap \Omega : \delta_\Omega(x) > \varepsilon \},
\]
and let $\widetilde{\omega} = \omega_{\Omega\setminus B_0}$.

By the Markov property for harmonic measure, and the
Bourgain lemma, we get, for $\varepsilon > 0$ small enough,
\begin{align*}
\omega^p\!\left(\tfrac{1}{10} B_0\right)
&= \int_{\partial B_0 \cap \Omega} \omega^x\!\left(\tfrac{1}{10} B_0\right)
\, d\widetilde{\omega}^p(x) \\
&\le C \varepsilon^{\alpha} \, \widetilde{\omega}^p(\partial B_0 \cap \Omega)
   + \int_E \omega^x\!\left(\tfrac{1}{10} B_0\right) \, d\widetilde{\omega}^p(x).
\end{align*}

By the Bourgain lemma, we have $\omega^x(Q) \gtrsim 1$ on
$\partial B_0 \cap \Omega$, and thus by the maximum principle we get
\[
\widetilde{\omega}^p(\partial B_0 \cap \Omega)
\le \omega^p(Q)
\le M_2 \, \omega^p\!\left(\tfrac{1}{10} B_0\right).
\]
The last inequality follows from \cref{eq91}.

Thus, with $\varepsilon > 0$ small enough depending on $M_1$ and $a$,
we get from the above two inequalities that
\begin{equation}\label{eq:2}
\omega^p\!\left(\tfrac{1}{10} B_0\right)
\le 2 \int_E \omega^x\!\left(\tfrac{1}{10} B_0\right)
\, d\widetilde{\omega}^p(x).
\end{equation}

Let $B_j$ be a covering of $E$ by bounded number of balls centered on $E$,
of radius $\varepsilon/4$, so that $2B_j \subset \Omega$.
We show there is $t > 0$ (depending on $\varepsilon$ and $M_1$), and there
is $j$ so that if $B' = B_j$, we get
\[
\widetilde{\omega}^p(B') \ge t \, \omega^p\!\left(\tfrac{1}{10} B_0\right).
\]

If not, then for each $j$ we have
\[
\widetilde{\omega}^p(B_j) < t \, \omega^p\!\left(\tfrac{1}{10} B_0\right).
\]
Using the fact that harmonic measure is at most $1$, we get
\begin{align*}
\omega^p\!\left(\tfrac{1}{10} B_0\right)
&\lesssim \sum_j \int_{B_j \cap \partial B_0}
\omega^x\!\left(\tfrac{1}{10} B_0\right) \, d\widetilde{\omega}^p(x) \\
&\le \sum_j 1 \cdot \widetilde{\omega}^p(B_j \cap \partial B_0)
\lesssim_\eps t \, \omega^p\!\left(\tfrac{1}{10} B_0\right),
\end{align*}
which is a contradiction for $t$ small enough.

Thus we have a ball $B'$ centered on $\partial B_0$ with $2B' \subset \Omega$ and
\[
r_{B'} = \frac{\varepsilon}{2},
\]
and so that
\begin{equation}\label{eq94}
\widetilde{\omega^p}(B') \gtrsim_{\varepsilon,M_1} \omega^p\!\left(\tfrac{1}{10}B_0\right)
\gtrsim M_2^{-1}\,\omega^p(Q).
\end{equation}

Note that the last inequality follows from \cref{eq91}. 
Thus we have, for any $Q_\subset Q$, that
\begin{align}\label{eq95}
\omega^p(Q_1)
\ge \int_{B'\cap B_0} \omega^x(Q_1)\,d\widetilde{\omega^p}(x)
\gtrsim_{\varepsilon,\delta} M_{2}^{-1}\omega^p(Q)\,\omega^x(Q_1).
\end{align}
In the last step we have used  \cref{eq94}.

We then get from \cref{lemma9}, for any $M$ large enough, the existence of a dyadic subcube $Q_1\subset Q$, so that,
\[
\omega^x(Q_1) 
> M\,\frac{\sigma(Q_1)}{\sigma(Q)} \omega^x(Q)\geq Mc_0 \frac{\sigma(Q_1)}{\sigma(Q)}.
\]

In the last step above, we have again used the Bourgain estimate. So, from \cref{eq95} above, we get, 
\[
\omega^p(Q_1)
\gtrsim M_{2}^{-1} Mc_0\omega^p (Q)\frac{\sigma(Q_1)}{\sigma(Q)}.
\]

Thus if we choose $M$ large enough compared to $M_2$, we get the desired result.

\end{proof}

This concludes the result and leads to the following \cref{lemma11} in Section 6.

\section{Dimension drop of the harmonic measure}

Recall that the dimension of a Borel measure $\mu$ in $\mathbb{R}^n$ is defined as follows:
\[
\dim(\mu)
=
\inf\bigl\{ \dim(G) : G \subset \mathbb{R}^n \text{ Borel},\ \mu(G^{c})=0 \bigr\}.
\]


\begin{lemma}\label{lemma11}
For $n \ge 1$, $s>0$, $C_1>1$, for any large $M=M(n,s,C_0)>1$, the following holds. Let $\partial\Omega \subset \mathbb{R}^{n}$ be an
$(s,C_1)$--AD regular set with $n-1-\delta_0 \leq s\leq n-1$. Let $\omega:=\omega^{p_0}$ be the harmonic measure with pole $p_0$, supported on $\partial\Omega$ and
$\eta \in (0,1)$ such that, for any dyadic cube $C_1 \subset \partial\Omega$, $0<l(C_1) \le
\operatorname{diam}(\partial\Omega)$, there exists a dyadic subcube $C_2\subset C_1$ with
\[
  \l(C_2)\geq \eta \l(C_1),
\]
satisfying 
\begin{equation}
\Theta(C_2)
\ge
M\Theta(C_1)
\qquad\text{or}\qquad
\Theta(C_2)
\le
M^{-1}\Theta(C_1).
\end{equation}
Here we have used the notation, $\Theta=\Theta^{s}_\omega$ . Then $\dim \omega < s$.
\end{lemma}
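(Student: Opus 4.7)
\noindent The plan is to first upgrade the density dichotomy hypothesis into a uniform density boost $M_0>1$ via the pigeonhole argument already carried out in the mid-paper sub-lemma (also called ``Lemma 11'' there), then iterate along a refining tree of sub-cubes rooted at an arbitrary cube $R$ with $\omega(R)>0$, and apply a conditional law of large numbers along the $\omega$-random path in the tree to show that for $\omega$-a.e.\ $x$ there are arbitrarily small dyadic sub-cubes $Q\ni x$ with $\omega(Q)\geq c\,l(Q)^{\kappa}$ for an explicit $\kappa<s$. A Vitali covering lemma then forces $\dim\omega\leq\kappa<s$.

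First I would verify the density-boost upgrade: there is $M_0=M_0(M,\eta,C_1)>1$ so that every dyadic cube $Q$ admits a dyadic sub-cube $Q^+\subset Q$ with $l(Q^+)\geq\eta\, l(Q)$ and $\Theta(Q^+)\geq M_0\Theta(Q)$. In the ``up'' case $\Theta(C_2)\geq M\Theta(Q)$ this is immediate with $M_0=M$. In the ``down'' case $\Theta(C_2)\leq M^{-1}\Theta(Q)$, the mass-balance argument carried out in the mid-paper sub-lemma forces a sibling $C_2'\subset Q$ at the scale of $C_2$ with $\Theta(C_2')\geq M_1\Theta(Q)$ for an explicit $M_1>1$; set $M_0:=\min(M,M_1)$. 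Ahlfors--David regularity then gives the mass-fraction bound $\omega(Q^+)/\omega(Q)\geq p:=cM_0\eta^s/C_1^2\in(0,1)$.

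Next, fixing $R$ with $\omega(R)>0$, I would construct the iteration tree $\mathcal{T}$ whose children of any node $Q$ are $Q^+$ together with the other dyadic sub-cubes of $Q$ at the scale of $Q^+$, each sibling recursively processed; the depth-$k$ leaves partition $R$ with diameters in $[\eta^k l(R),2^{-k}l(R)]$. Writing $Q_k(x)$ for the unique depth-$k$ leaf containing $x$ and $I_k(x):=\mathbf{1}\{Q_k(x)=Q_{k-1}(x)^+\}$, the previous step gives $\mathbb{E}_\omega[I_k\mid\mathcal{F}_{k-1}]\geq p$. An Azuma--Hoeffding concentration inequality on the bounded martingale differences $I_k-\mathbb{E}[I_k\mid\mathcal{F}_{k-1}]$ combined with Borel--Cantelli yields $\sum_{j=1}^{k}I_j(x)\geq(p/2)k$ for $\omega$-a.e.\ $x$ and all large $k$. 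Each good step boosts $\Theta$ by at least $M_0$; at bad steps, AD regularity combined with the telescoping $\omega(Q_k(x))/\omega(R)=\prod_j\omega(Q_j)/\omega(Q_{j-1})$ yields $\Theta(Q_j)/\Theta(Q_{j-1})\geq c\,(\omega(Q_j)/\omega(Q_{j-1}))\cdot\eta^{-s}$, and after rearrangement with the LLN one obtains, for infinitely many $k$,
\[
  \omega(Q_k(x))\;\geq\;c\,l(Q_k(x))^{\kappa},\qquad \kappa:=s-\tfrac{p\log M_0}{2\log(1/\eta)}\,<\,s.
\]

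Finally, the set $A:=\{x\in\partial\Omega:\exists\ Q\ni x\ \text{arbitrarily small with}\ \omega(Q)\geq c\,l(Q)^{\kappa}\}$ has $\omega(\partial\Omega\setminus A)=0$; a Vitali subcover of $A$ by such cubes of diameter $<\varepsilon$ yields $\sum l(Q_i)^\kappa\leq c^{-1}\omega(\partial\Omega)<\infty$, so $\mathcal{H}^\kappa(A)<\infty$, $\dim A\leq\kappa<s$, and hence $\dim\omega<s$. The hard part will be the quantitative telescoping above: at bad steps the pointwise density ratio $\Theta(Q_j)/\Theta(Q_{j-1})$ has no uniform positive lower bound, so the required density growth along an $\omega$-generic path must be extracted by delicately balancing the mass-conservation telescoping for $\omega$, the geometric depletion of $\sigma$ at rate $\eta^{ks}$, and the Azuma--LLN bound on the fraction of good steps in $[1,k]$; the resulting $\kappa$ depends linearly on $\log M_0$ and inversely on $\log(1/\eta)$, mirroring the form of the $\delta_0$ parameter appearing elsewhere in the paper.
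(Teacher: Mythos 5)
The paper does not actually prove this lemma: it simply observes that the hypothesis implies the one in Lemma~2.8 of \cite{Tol24} (which traces back to \cite{Bou87,Bat96}) and defers to that result. Your overall strategy — upgrade the dichotomy to a one-sided density boost via the mid-paper mass-balance sub-lemma, build a refining tree, run a law-of-large-numbers argument along the $\omega$-random path, and close with a Vitali (or maximal dyadic) covering to get $\mathcal{H}^{\kappa}(A)<\infty$ — is indeed the standard route used in those references, and the first and last steps are fine as sketched.

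The gap, which you partly flag yourself, is in the ``bad steps.'' Azuma--Hoeffding applied to the indicators $I_k$ controls only the \emph{frequency} of good steps; it gives no control on $\log\omega(Q_k(x))$ because a single bad step contributes $\log\bigl(\omega(Q_j)/\omega(Q_{j-1})\bigr)$, which is unbounded below. Your displayed estimate $\Theta(Q_j)/\Theta(Q_{j-1})\geq c\,\bigl(\omega(Q_j)/\omega(Q_{j-1})\bigr)\eta^{-s}$ is not a usable lower bound: it is just the definition of $\Theta$ rearranged, and its right-hand side still contains the uncontrolled mass ratio, so ``rearrangement with the LLN'' does not close the estimate. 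The missing ingredient is the Shannon-entropy bound. One should run the martingale argument directly on the log-ratios $X_j:=-\log\bigl(\omega(Q_j)/\omega(Q_{j-1})\bigr)$, whose conditional expectation $\mathbb{E}_\omega[X_j\mid\mathcal{F}_{j-1}]$ equals the entropy of the child distribution of $Q_{j-1}(x)$. This is always at most $\log N\lesssim s\log(1/\eta)+\log C_1$ by AD regularity, and the presence of the high-mass child $Q^{+}$ (carrying $\omega$-probability $q\gtrsim M_0\eta^{s}/C_1^{2}$) forces an entropy \emph{deficit} of order $q\log M_0$, provided $M_0$ is taken large enough that $\log M_0\gtrsim\log C_1$. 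Only with this deficit does one obtain $\mathbb{E}_\omega[X_j\mid\mathcal{F}_{j-1}]\leq s\log(1/\eta)-\epsilon$ for a uniform $\epsilon>0$; a martingale SLLN (which needs a word about integrability of $X_j$, e.g. that $\sum p\log^{2}p$ is bounded in terms of $N$, since the $X_j$ are unbounded so Azuma does not apply directly) then yields $\log\omega(Q_k(x))\geq -(s\log\tfrac1\eta-\epsilon)k+O(1)$ for $\omega$-a.e.\ $x$, which is what the Vitali step needs. Your displayed $\kappa$ does come out of that computation, which suggests you know the target, but as written the argument between Azuma-on-$I_k$ and the final density inequality does not go through without the entropy estimate.
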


Note that because of the $s-$Ahlfors David regularity hypothesis, we have that, $\sigma(C_2)\approx l(C_2)^s,$ $ \sigma(C_1)\approx l(C_1)^s$.

An earlier version of this lemma is stated in terms of balls, instead of dyadic cubes, (see Lemma 2.8 of \cite{Tol24}), and goes back to \cite{Bou87,Bat96}. Using the covering arguments used in the beginning of \cref{lemma12}, and also \cref{cubelemma}, one sees that the hypothesis of \cref{lemma11} implies that of \cite{Tol24}. \footnote{In fact the discerning reader will see that in several places in the proof, one could have worked with balls instead of dyadic cubes.}

This completes the proof of \cref{mainthm}.

\section{Future directions}

    If we consider the setting of \cite{Tol24}, where the boundary $\partial\Omega$ is contained in a co-dimension one hyperplane $\Gamma$, then one considers corkscrew balls contained within this hyperplane and the nearest `corner' point $q_1$ on $\partial\Omega\cap \Gamma$ to this corkscrew point $p_1$ in question. There it was shown that the dimension drop phenomenon works for boundaries of dimension at least $n- 3/2 -\eps$ for some small enough epsilon. In future course of study, we hope to use the methods of this paper to study this question.


\section{Acknowledgements} The author is grateful to Stephen Montgomery-Smith and Ridhhipratim Basu for helpful discussions. 

\section{Use of Large Language Models} The figure and notation table in Section 4 was compiled with the assistance of a large language model and reviewed by the author for accuracy. Large langauge models did not contribute to any of the mathematical content of this paper.

\bibliographystyle{alpha}

\begin{thebibliography}{HMM{\etalchar{+}}17}
\footnotesize


\bibitem[Aik08]{Aik08}
H.~Aikawa.
\newblock Equivalence between the boundary {H}arnack principle and the
  {C}arleson estimate.
\newblock {\em Math. Scand.}, 103(1):61--76, 2008.


\bibitem[AH08]{AH08}
H.~Aikawa and K.~Hirata.
\newblock Doubling conditions for harmonic measure in {J}ohn domains.
\newblock {\em Ann. Inst. Fourier (Grenoble)}, 58(2):429--445, 2008.



\bibitem[AAM16]{AAM16}
M.~Akman, J.~Azzam, and M.~Mourgoglou.
\newblock Absolute continuity of harmonic measure for domains with lower
  regular boundaries.
\newblock {\em arXiv preprint arXiv:1605.07291}, 2016.

\bibitem[AG01]{AG}
D.~H. Armitage and S.~J. Gardiner.
\newblock {\em Classical potential theory}.
\newblock Springer Monographs in Mathematics. Springer-Verlag London, Ltd.,
  London, 2001.
  
  

\bibitem[AHM{\etalchar{+}}16]{AHMMMTV16}
J.~Azzam, S.~Hofmann, J.~M. Martell, S.~Mayboroda, M.~Mourgoglou, X.~Tolsa, and
  A.~Volberg.
\newblock Rectifiability of harmonic measure.
\newblock {\em Geom. Funct. Anal.}, 26(3):703--728, 2016.


\bibitem[AM18]{AM18}
J.~Azzam and M.~Mourgoglou.
\newblock Tangent measures and absolute continuity of harmonic measure.
\newblock {\em Rev. Mat. Iberoam.}, 34(1):305--330, 2018.


\bibitem[AM15]{AM15}
J.~Azzam and M.~Mourgoglou.
\newblock Tangent measures and densities of harmonic measure.
\newblock {\em to appear in Rev. Math.}, 2015.

\bibitem[AMT17]{AMT17}
J.~Azzam, M.~Mourgoglou, and X.~Tolsa.
\newblock A two-phase free boundary problem for harmonic measure and uniform
  rectifiability.
\newblock {\em arXiv preprint arXiv:1710.10111}, 2017.

\bibitem[Anc86]{Anc86}
A.~Ancona.
\newblock On strong barriers and an inequality of {H}ardy for domains in {${\bf
  R}^n$}.
\newblock {\em J. London Math. Soc. (2)}, 34(2):274--290, 1986.

\bibitem[Az20]{Az20} Azzam, Jonas. "Dimension drop for harmonic measure on Ahlfors regular boundaries." Potential Analysis 53, no. 3 (2020): 1025-1041.

\bibitem[Bat96]{Bat96}
A.~Batakis.
\newblock Harmonic measure of some {C}antor type sets.
\newblock {\em Ann. Acad. Sci. Fenn. Math.}, 21(2):255--270, 1996.

\bibitem[Bat00]{Bat00}
A.~Batakis.
\newblock A continuity property of the dimension of the harmonic measure of
  {C}antor sets under perturbations.
\newblock {\em Ann. Inst. H. Poincar\'{e} Probab. Statist.}, 36(1):87--107,
  2000.

\bibitem[Bat06]{Bat06}
A.~Batakis.
\newblock Dimension of the harmonic measure of non-homogeneous {C}antor sets.
\newblock {\em Ann. Inst. Fourier (Grenoble)}, 56(6):1617--1631, 2006.

\bibitem[BE17]{BE17}
S.~Bortz and M.~Engelstein.
\newblock Reifenberg flatness and oscillation of the unit normal vector.
\newblock {\em arXiv preprint arXiv:1708.05331}, 2017.

\bibitem[Bou87]{Bou87}
J.~Bourgain.
\newblock On the {H}ausdorff dimension of harmonic measure in higher dimension.
\newblock {\em Invent. Math.}, 87(3):477--483, 1987.

\bibitem[Car85]{Car85}
L.~Carleson.
\newblock On the support of harmonic measure for sets of {C}antor type.
\newblock {\em Ann. Acad. Sci. Fenn. Ser. A I Math.}, 10:113--123, 1985.

\bibitem[Chr90]{Chr90}
M.~Christ.
\newblock A {$T(b)$} theorem with remarks on analytic capacity and the {C}auchy
  integral.
\newblock {\em Colloq. Math.}, 60/61(2):601--628, 1990.

\bibitem[Dav88]{Dav88}
G.~David.
\newblock Morceaux de graphes lipschitziens et int\'egrales singuli\`eres sur
  une surface.
\newblock {\em Rev. Mat. Iberoamericana}, 4(1):73--114, 1988.

\bibitem[DS1]{DS1}
G.~David and S.~Semmes.
\newblock Singular integrals and rectifiable sets in $\mathbb{R}^n$: Beyond
  Lipschitz graphs.
\newblock {\em Ast\'erisque}, 193, 1991.

\bibitem[DS2]{DS2}
G.~David and S.~Semmes.
\newblock Analysis of and on uniformly rectifiable sets.
\newblock {\em Mathematical Surveys and Monographs}, Vol.~38. American
  Mathematical Society, 1993.

  \bibitem[DJJ23]{DJJ23}
G.~David, C.~Jeznach, and A.~Julia.
\newblock Cantor sets with absolutely continuous harmonic measure.
\newblock {\em arXiv preprint arXiv:2303.02055}, 2023.



\bibitem[GM08]{Harmonic-Measure}
J.~B. Garnett and D.~E. Marshall.
\newblock {\em Harmonic measure}, volume~2 of {\em New Mathematical
  Monographs}.
\newblock Cambridge University Press, Cambridge, 2008.
\newblock Reprint of the 2005 original.

\bibitem[HKM06]{HKM}
J.~Heinonen, T.~Kilpel{\"a}inen, and O.~Martio.
\newblock {\em Nonlinear potential theory of degenerate elliptic equations}.
\newblock Dover Publications, Inc., Mineola, NY, 2006.
\newblock Unabridged republication of the 1993 original.

\bibitem[HLMN17]{HLMN17} Hofmann, Steve, Long Le, José María Martell, and Kaj Nyström. "The weak-A$\infty$ property of harmonic and p-harmonic measures implies uniform rectifiability." Analysis \& PDE 10, no. 3 (2017): 513-558.

\bibitem[HM12]{HM12}
T.~Hyt{\"o}nen and H.~Martikainen.
\newblock Non-homogeneous {$Tb$} theorem and random dyadic cubes on metric
  measure spaces.
\newblock {\em J. Geom. Anal.}, 22(4):1071--1107, 2012.

\bibitem[HMM{\etalchar{+}}17]{HMMTZ17}
S.~Hofmann, J-M. Martell, S.~Mayboroda, T.~Toro, and Z.~Zhao.
\newblock Uniform rectifiability and elliptic operators with small carleson
  norm.
\newblock {\em arXiv preprint arXiv:1710.06157}, 2017.

\bibitem[HMU14]{HMU14}
S.~Hofmann, J.~M.~Martell, and I.~Uriarte-Tuero.
\newblock Uniform rectifiability and harmonic measure, {II}: {P}oisson kernels in $L^p$ imply uniform rectifiability.
\newblock {\em Memoirs of the American Mathematical Society}, 221(1039):1--160, 2014.


\bibitem[JW88]{JW88}
P.~W. Jones and T.H. Wolff.
\newblock Hausdorff dimension of harmonic measures in the plane.
\newblock {\em Acta Math.}, 161(1-2):131--144, 1988.

\bibitem[KT99]{KT99}
C.~E. Kenig and T.~Toro.
\newblock Free boundary regularity for harmonic measures and {P}oisson kernels.
\newblock {\em Ann. of Math. (2)}, 150(2):369--454, 1999.

\bibitem[LVV05]{LVV05}
J.~L. Lewis, G.~C. Verchota, and A.~L. Vogel.
\newblock Wolff snowflakes.
\newblock {\em Pacific J. Math.}, 218(1):139--166, 2005.

\bibitem[Mak85]{Mak85}
N.~G. Makarov.
\newblock On the distortion of boundary sets under conformal mappings.
\newblock {\em Proc. London Math. Soc. (3)}, 51(2):369--384, 1985.

\bibitem[Mat95]{Mattila}
P.~Mattila.
\newblock {\em Geometry of sets and measures in {E}uclidean spaces}, volume~44
  of {\em Cambridge Studies in Advanced Mathematics}.
\newblock Cambridge University Press, Cambridge, 1995.
\newblock Fractals and rectifiability.

\bibitem[Mey09]{Mey09}
D.~Meyer.
\newblock Dimension of elliptic harmonic measure of snowspheres.
\newblock {\em Illinois J. Math.}, 53(2):691--721, 2009.

\bibitem[MV86]{MV86}
N.~Makarov and A.~Volberg.
\newblock On the harmonic measure of discontinuous fractals.
\newblock 1986.

\bibitem[Pop98]{Pop98}
I.~Popovici.
\newblock {\em Rigidity and dimension of the harmonic measure on {J}ulia sets}.
\newblock ProQuest LLC, Ann Arbor, MI, 1998.
\newblock Thesis (Ph.D.)--Michigan State University.

\bibitem[Rie23]{Rie23}
F.~Riesz.
\newblock {\"U}ber die Randwerte einer analytischen Funktion.
\newblock {\em Mathematische Zeitschrift}, 18(1):87--95, 1923.

\bibitem[Tol15]{Tol15}
X.~Tolsa.
\newblock Uniform measures and uniform rectifiability.
\newblock {\em J. Lond. Math. Soc. (2)}, 92(1):1--18, 2015.

\bibitem[Tol24]{Tol24}
X.~Tolsa.
\newblock The dimension of harmonic measure on some AD-regular flat sets of fractional dimension.
\newblock {\em Int. Math. Res. Not.}, 2024(8):6579--6605, 2024.

\bibitem[UZ02]{UZ02}
M.~Urba\'{n}ski and A.~Zdunik.
\newblock Hausdorff dimension of harmonic measure for self-conformal sets.
\newblock {\em Adv. Math.}, 171(1):1--58, 2002.

\bibitem[Vol92]{Vol92}
A.L.~Volberg.
\newblock On the harmonic measure of self-similar sets on the plane.
\newblock In {\em Harmonic Analysis and Discrete Potential theory}, pages
  267--280. Springer, 1992.

\bibitem[Vol93]{Vol93}
A.~L. Volberg.
\newblock On the dimension of harmonic measure of {C}antor repellers.
\newblock {\em Michigan Math. J.}, 40(2):239--258, 1993.

\bibitem[Vol22]{Vol22}
A.~Volberg.
\newblock One phase problem for two positive harmonic functions: below the codimension~1 threshold.
\newblock {\em arXiv preprint arXiv:2205.03687}, 2022.

\bibitem[Wol95]{Wol95}
T.~H. Wolff.
\newblock Counterexamples with harmonic gradients in {${\bf R}^3$}.
\newblock In {\em Essays on {F}ourier analysis in honor of {E}lias {M}. {S}tein
  ({P}rinceton, {NJ}, 1991)}, volume~42 of {\em Princeton Math. Ser.}, pages
  321--384. Princeton Univ. Press, Princeton, NJ, 1995.

\end{thebibliography}
\newcommand{\etalchar}[1]{$^{#1}$}
\def\cprime{$'$}

\end{document}